\def\XXint#1#2#3{{\setbox0=\hbox{$#1{#2#3}{\int}$ }
\vcenter{\hbox{$#2#3$ }}\kern-.6\wd0}}
 \newcounter{assumption}
\newenvironment{assumption}[1][]{\refstepcounter{assumption}\par\medskip
   \noindent\textbf{Assumption~(A\theassumption). #1} \rmfamily}{\medskip}
\theoremstyle{definition} 
\newtheorem{theorem}{Theorem}[section]                      %[section] gibt an, dass sich die Nummerierung immer auf die section bezieht (1.1 usw.)
\newtheorem{corollary}[theorem]{Corollary} 
\newtheorem{lemma}[theorem]{Lemma}
\newtheorem{definition}[theorem]{Definition}
\newtheorem{Remark}[theorem]{Remark}
\newenvironment{beweis}{\begin{proof}[Proof]}{\end{proof}}
\newtheorem{bemerkung}[theorem]{Remark}
\newcommand{\AlignFootnote}[1]{%
  \ifmeasuring@
  \else
    \iffirstchoice@
      \footnote{#1}%
    \fi
  \fi}
\DeclareMathOperator{\ent}{Ent}
\DeclareMathOperator*{\fast}{-a.s.}
\newcommand{\dd}{\mathrm{d}}
\newcommand{\E}{\mathbb{E}}
\newcommand{\fastsicher}{\quad \W \fast}
\newcommand{\W}{\mathbb{P}}
\newcommand{\ind}[1]{\mathds{ 1 }_{{#1}}}
\newcommand{\Q}{ \mathbb{Q} }
\newcommand{\F}[2]{\mathcal{F}^{#1}_{#2}}
\newcommand{\skalarq}[1]{ \langle #1 \rangle }
\DeclareMathOperator{\ccc}{C}
\newcommand{\N}{\mathbb{N}}
\newcommand{\R}{\mathbb{R}}
\renewcommand{\phi}{\varphi}
\renewcommand{\epsilon}{\varepsilon}
\newcommand{\eps}{\varepsilon}
\DeclareMathOperator{\llll}{L}
\DeclareMathOperator{\dive}{div}
\newcommand{\abs}[1]{\left\vert #1 \right\vert}
\newcommand{\norm}[1]{\vert\vert {#1}\vert\vert}
\newcommand{\lp}[1]{\llll^{#1}}
\newcommand{\T}{\mathbb{T}}
\newcommand{\PP}{\mathcal{P}}
\title{Strong Feller Regularisation of $1$-d Nonlinear Transport by Reflected Ornstein--Uhlenbeck Noise\footnote{Supported in part by NNSFC (12531007)}}
\author{Max-K.\ von Renesse\footnote{Universit\"at Leipzig, renesse@math.uni-leipzig.de} \qquad Feng-Yu Wang\footnote{Tianjin University, wangfy@tju.edu.cn} \qquad  
      Alexander Weiß\footnote{Universit\"at Leipzig,  alexander.weiss@math.uni-leipzig.de} \\
}
\begin{document}

\maketitle

\abstract{We consider equations of smooth nonlinear transport on the real line  with regular self-interactions appearing in aggregation models and deterministic mean-field dynamics. We introduce a random perturbation of such systems through a stochastic orientation-preserving flow, which is given by an integrated infinite-dimensional  Ornstein-Uhlenbeck process with reflection. As our main result, we show that the induced stochastic dynamics yield a measure-valued diffusion process on a class of regular measures. Moreover, we establish a quantitative stability of the associated semigroup in relative entropy. As a consequence, the strong Feller property is obtained, as an instance of regularization by noise in case of probability measure valued dynamics. }

\section{Introduction and statement of main results}

This work is inspired by the recent contributions 
\cite{delarue2024rearranged,delarue2024intrinsicregularizationnoise1d} 
to the regularisation by noise phenomenon, 
which is studied there in the case of certain conservative dynamics on the space of measures. Classically, regularisation by noise arises in finite-dimensional ordinary differential equations (ODEs) in various forms. For instance, ODEs with irregular coefficients may admit unique solutions when perturbed or driven by stochastic signals. Other manifestations include improved mixing, the emergence of ergodicity, or enhanced stability of solutions with respect to initial conditions (cf.\ e.g.\ \cite{gess2016regularization, flandoli2011regularization} for an overview). A common explanation for these effects is the additional regularity introduced through diffusion, which in finite dimension is often exploited via PDE methods for a thorough analysis.

In the case of conservative measure-valued dynamical systems, profound new challenges appear when one aims to reproduce similar regularisation effects. First, the powerful tools from PDE theory and its regularity results can typically no longer be used in infinite dimensions. This problem, however, has been successfully addressed over the past years in a number of important cases, which we briefly review in Section \ref{sec:litrev}. Second, the space of probability measures is non-linear (i.e., at best a convex polytope), and so one must find meaningful stochastic perturbations that are on the one hand "strong (i.e., elliptic) enough" and at the same time tangential to the given non-linear state space to yield consistent dynamics.

Conservative deterministic measure-valued dynamics arise as natural macroscopic descriptions in a wide variety of models of very different microscopic origin. Important examples include McKean--Vlasov equations, linear or nonlinear Fokker--Planck dynamics, and mean-field games. In this work, we are guided by the perspective of interpreting them as different models of nonlinear transport with (possibly singular) self-interaction. More specifically, we start from the following model of nonlinear deterministic transport on $\R$
\begin{align}\label{Transportglg}
\begin{split}
\dot \mu_t &= -\operatorname{div} (\mu_t \cdot b_{\mu_t}), \\
\mu_0 &= \mu.    
\end{split}
\end{align}
A standard example is $b(u,\mu) = (\nabla_u \log \mu)(u)$, which induces the heat flow for $\mu$, but below we shall work under rather restrictive assumptions on $b$ allowing only for very regular self-interactions.

In Lagrangian form, the equation becomes  
\begin{align}
\begin{cases}\label{Lagrange}
dx_\mu(u,t) &= b(x_\mu(u,t),\mu_t)\, dt, \\
x_\mu(u,0) &= u, \qquad \forall u \in \R, \\
\mu_t &= \mu \circ x_\mu^{-1}(\cdot,t),
\end{cases}
\end{align}
where $\mu \circ x^{-1}(\cdot,t)$ denotes the pushforward of $\mu$ under the map $x(\cdot,t)$ on $\R$. In fact, assuming smoothness of $b$, the flow $(\mu_t)_{t \ge 0}$ is determined as the unique solution of the nonlinear continuity equation \eqref{Transportglg} with initial condition $\mu$.

Next, since for smooth or coercive $b$ the finiteness of the second moments along the flow is preserved, we identify the measure-valued process $(\mu_t)_{t \ge 0}$ with the $\lp{2}([0,1])$-valued inverse cdfs $(F^\mu_t)_{t \ge 0}$. This yields the equivalent representation  
\begin{align}\label{Wechselwirkung0}
\begin{cases}
dF^\mu_t &= b(F^\mu_t,\mu_t)\, dt, \\
F_0^\mu &= F^\mu, \\
\mu_t &= \lambda \circ (F^\mu_t(\cdot))^{-1}.
\end{cases}
\end{align}
The formal relation between the two representations given by $x_\mu(\cdot,t) = F^\mu_t \circ (F^\mu)^{-1}(\cdot)$. 

Our focus is  on regularisation by noise in the sense of enhanced stability of solutions with respect to the initial conditions, obtained as a result of perturbation by a properly chosen structure-preserving stochastic forcing. In order to regularise the system \eqref{Wechselwirkung0} by adding noise, one needs to ensure that the process remains an inverse cdf. In particular, monotonicity must be preserved. We achieve this by differentiating \eqref{Wechselwirkung0} and adding noise to obtain an SPDE with reflection.

To this aim, we represent a smooth enough   measure $\mu\in \mathcal P_2(\R)$ uniquely in terms of the pair 
\[ \mu \in \mathcal P_2(\R) \simeq (g^\mu, M^\mu),\]
where \[ g^\mu = (F^\mu)', \quad M^\mu = \int_\R x\,\mu(dx)= \int_0^1 F^\mu (s) ds.\]
This leads us to considering the the set of "smooth measures"  
\[
\PP_2^1(\R) = \Big\{ \mu \in \PP_2(\R)\,\Big|\,F^\mu \in H^1((0,1))  \Big\},
\]
which we equip with the natural metric $\rho$ defined by\\
 \[
\rho^2(\mu, \nu) = |M^\mu - M^\nu |^2 + \|F'_\mu - F'_\nu\|^2_{L^2([0,1])}.
\]
Note that  via Poincaré-inequality on $[0,1]$ it holds that 
\[ \gamma_2(\mu, \nu) \leq C \rho(\mu,\nu) \quad \forall \mu, \nu \in \PP(\R)\] for some universal constant $C>0$, where $\gamma_2$ denotes the standard 2-Wasserstein metric on the set $\PP_2(\R)$ of probability measures with finite second moment. In particular,  $(\PP_2^1(\R), \rho)$ is (locally) compactly and densely embedded in $(\PP_2(\R),\gamma_2)$, and each element $\mu \in \PP_2^1(\R)$ is uniquely represented by a pair $(g^\mu,M^\mu)$, where  $g^\mu \in  L^2((0,1))$ is a non-negative square integrable function and $M^\mu \in \R$ is a real number. 

Using this representation,  assuming sufficient smoothness on $b$, we may take the spatial derivative of \eqref{Wechselwirkung0}  to obtain an equivalent system for the description of the $\mu$-dynamics in terms of $(g,M)$, which reads (see section 3 for details)
\[
\begin{cases}
dg_t &= b'(A[(g_t,M_t)],\mu_t)\, g_t \, dt, \\
dM_t &= \int_0^1  b(A[(g_t,M_t)],\mu_t)(x)\, g_t(x)\, dx \, dt, \\
A[(g_t,M_t)](\cdot ) &= \int_0^1 \int^\cdot_u g(r)\, dr\, du + M_t, \\
\mu_t &= \lambda \circ A[(g_t,M_t)]^{-1},\\
g_0 &= \frac{\partial}{\partial u}F^{\mu}, \\
M_0&= M^\mu.
\end{cases}
\]

Finally, in order to produce a meaningful stochastic perturbation that preserves positivity on the level of derivatives, we consider the SPDE system with reflection for the triple $(g,M, \eta)$:
\begin{align}\label{Regularisiert}
\begin{cases}
dg_t &= b'(A[(g_t,M_t)],\mu_t)\, g_t\, dt + \Delta g_t\, dt + dW_t + \eta, \\
dM_t &= \int_0^1 b(A[(g_t,M_t)],\mu_t)(x)\, g_t(x)\, dx\, dt + dB_t, \\
A[(g_t,M_t)](\cdot ) &= \int_0^1 \int^\cdot_u g(r)\, dr\, du + M_t, \\
\mu_t &= \lambda \circ A[(g_t,M_t)]^{-1}, \\
g_t &\ge 0, \\
\langle \eta, g\rangle& =0, 
\end{cases}
\end{align}
which is understood as an SPDE in the Itô sense. Here $\Delta$ is the Laplace operator on $[0,1]$ with Dirichlet boundary conditions, $(W_t)_{t \geq 0}$ is a cylindrical  $L^2([0,1])$-Wiener process, $(B_t)_{t \geq 0}$ is an independent real Brownian motion, and $\eta$ is an adapted random measure on $\R_{\ge 0}\times [0,1]$ enforcing reflection of $g$ at zero to preserve non-negativity of solutions.

As our main result, we show well-posedness of such systems of equations for regular initial data $g_0$ and apply the coupling method to demonstrate the strong Feller regularisation result under strong regularity assumptions on $b$ as follows.

\begin{assumption}\raisebox{\ht\strutbox}{\hypertarget{(A1)}{}}   
 Both $b: \R\times \PP_2(\R) \to \R$ and $b': \R\times \PP_2(\R) \to \R$, where $b'(u, \mu ) = \partial_u b(u,\mu)$ is the partial derivative of $b$ with respect to $u \in \R$, are uniformly bounded and jointly globally Lipschitz-continuous with respect to $|\cdot|$ and $\gamma_2$.
\end{assumption}

Our main results can then be summarised as follows. 

\begin{theorem} \label{thm:mainthm} 
Under Assumption \hyperlink{(A1)}{(A1)}, the system \eqref{Regularisiert} is well-posed for initial conditions $M_0 \in \R$, $g_0\ge0 \in \ccc_0([0,1])$. The family of solutions extends uniquely to a Markov diffusion process on $\mathcal \PP_2^1(\R)$ which is strong Feller. More specifically, for the induced semigroup of kernels on $\PP$ it holds, for all  $\theta\in(0,1)$ and $T>0$
\begin{align*}    \ent&\big( P_T(\cdot,\nu)\vert P_T(\cdot,\mu)\big) \le H_{T,\theta}(\rho_{\mu,\nu},\rho_{\mu})
\end{align*}    
where $\rho_{\mu,\nu}=\rho(\mu,\nu)$, $\rho_\mu = \rho(\delta_{M^\mu},\mu)$, and for some constant $C>0$
\begin{align*}     
     H_{T,\theta}  (\rho_{\mu,\nu},\rho_{\mu})\\
=  C &\Bigg(\frac{1}{T\land 1}\left(\rho^2_{\mu,\nu}+\rho^{\theta}_{\mu,\nu}+\rho_{\mu,\nu}\right)+\left(\rho^{1+\theta}_{\mu,\nu}+\rho^2_{\mu,\nu}\right)\\ &
    +(1+\rho^2_\mu)^{1+\theta}\Big(\frac{T\land 1}{1-\theta}\Big)^{\theta}\big(\log(1+\rho^{-1}_{\mu,\nu})\big)^{-\theta}\Bigg),
\end{align*}
\end{theorem}

\begin{Remark}
1) The term $\frac 1 T \rho^2_{\mu, \nu}$ on the right hand side of the entropy estimate, for small $T>0$ features a Gaussian-like behaviour, which in the current case, of course, cannot be expected in clean form. \\
2) Via Pinsker's inequality 

one concludes from the estimate above that  for any  measurable $F:\PP_2^1 \to \R$, that 
\[ |P_TF(\mu)- P_TF(\nu)| \leq \|F\|_\infty  \sqrt {\frac {1}{2} H_{T,\theta}(\rho_{\mu,\nu},\rho_{\mu}\land\rho_\nu)}.\]
 This is the the strong Feller property in quantitative form. In particular, the map  $\mu \to P_TF(\mu)$ is continuous on $\PP^1_2(\R)$, locally uniformly with respect to the metric $\rho$.

3) A classical example of a $b$ satisfying condition \hyperlink{(A1)}{(A1)} is the McKean--Vlasov interaction
\[
b(u,\mu) = \int_\R h(u-v)\, \mu(dv),
\]
where $h \in C_c^\infty(\R)$ is a smooth kernel function.  

4) Theorem \ref{thm:mainthm} implies in particular that the induced Markov process $(\mu_t^{\mu})_{t \ge 0}^{\mu \in \PP_2^1(\R)}$ has the strong Feller property on $\PP_2^1(\R)$. As mentioned above, this property represents an instance of regularisation by noise. In fact, for purely deterministic systems such a statement is false in general, even if the data of the ODE are smooth. The underlying mechanism for such a regularisation effect is the possibility to translate the perturbation of the initial condition inside the expectation into a well-behaved perturbation of the stochastic signal, provided that the set of admissible shifts for quasi-invariance of the underlying probability space is rich enough. This principle lies at the heart of the phenomenon and typically involves a Girsanov transform argument. In Section \ref{sec:strongfellerprop} we will pursue this strategy in the present setting accordingly.
\end{Remark}

The remainder of the paper is organised as follows. A review of the relevant literature and predecessors is given in Section \ref{sec:litrev}. The setting and notation are introduced in Section \ref{sec:setting}. Section \ref{sec:wellposed} is devoted to the well-posedness of the system \eqref{Regularisiert}. Finally, the proof of our main theorem is presented in Section \ref{sec:strongfellerprop}.

\section{Literature and Previous Results} \label{sec:litrev}

The main inspiration for this work comes from the recent breakthrough contribution of Delarue and Hammersley \cite{delarue2024rearranged}, where a strong Feller diffusion on $\PP(\R)$ was constructed by means of a novel symmetrization (¨rearrangement¨) mechanism applied to an infinite dimensional OU-process over $\R$. Using this model for stochastic perturbation, a strong well-posedness result in 1-d was obtained in \cite{delarue2024intrinsicregularizationnoise1d} in the context of  mean-field games. Previous works in this direction  was carried out by Marx \cite{marx1, marx2} using a particle-based stochastic regularization, motivated by the construction of the Wasserstein diffusion \cite{von2009entropic,andres2010particle}. Our construction below is very similar in spirit to the above mentioned work \cite{delarue2024rearranged}, but uses a more explicit reflection mechanism, which allows for a simplified and arguably slightly more conventional coupling procedure.

The common idea for all these works is to establish or use regularization by noise, here in the setting of measure valued dynamics. For a comprehensive overview of the regularisation-by-noise phenomenon, mostly in the finite-dimensional setting, we refer to the review articles \cite{gess2016regularization, flandoli2011regularization}. Crucial progress in finite-dimensional non-Markovian or pathwise settings beyond the classical Krylov–Röckner framework \cite{MR2117951} was made by Friz and Cass \cite{MR2680405} and many subsequent works in the spirit of the rough-path framework, with exciting new developments based on Gubinelli’s fundamental sewing lemma \cite{MR2091358} and Le’s extension \cite{MR4089788} to the stochastic case; cf.\ e.g.\ \cite{MR4930606,MR4720220}. In infinite-dimensional settings, regularisation by noise reveals how stochastic perturbations can restore uniqueness or improve well-posedness of ill-posed PDEs. For instance, Flandoli, Gubinelli, and Priola demonstrated pathwise uniqueness for a stochastically perturbed transport equation despite deterministic non-uniqueness \cite{MR2593276}. Hairer and Mattingly further established strong Feller and ergodicity for the stochastic Navier–Stokes system in a hypoelliptic framework \cite{MR2259251}; see also \cite{MR3825883} for the strong Feller property for singular SPDEs. 

\nocite{peszat1995strong,flandoli2011regularization,zhang2010white}

Systems of the type \eqref{Lagrange} with additional stochastic forcing $\sigma(x_\mu(u,t),\mu_t)\, dW_t$ for finite-dimensional Brownian motion $W$ were introduced by A.\ A.\ Dorogovtsev on $\R^d$ in \cite{AAD1} under the name \textit{SDEs with interaction} and have been studied intensively ever since (see, e.g., \cite{dorogovtsev2023measure, DorWeiss, gess2022conservativespdesfluctuatingmean}). In spite of the structural similarity to McKean–Vlasov equations, we emphasize that the measures $\mu_t$ above are image measures under a self-induced flow, whereas in the McKean–Vlasov case they represent the time-evolving laws, i.e.\ statistical averages. In particular, in the extended Dorogovtsev system \eqref{Lagrange} with noise, the measure-valued process $(\mu_t)$ is random, contrary to the McKean–Vlasov case. Hybrid models were recently investigated, e.g., in \cite{wang2021image}.

\smallskip

Measure-valued processes of this type have been heavily studied in the last five years. Wang, who, independently of Dorogovtsev \cite{AAD1,Dorogovtsev+2024}, reintroduced the notion of SDEs with interaction on Euclidean spaces under the name \textit{image-dependent SDEs} in \cite{wang2021image} and studied their properties, most notably the semigroup properties of the measure-valued process and smoothness of solutions with respect to the initial measure. Furthermore, the connection between SDEs with interaction and McKean–Vlasov equations with common noise, and therefore mean-field games, was established. Wang and Ren studied regularity properties of McKean–Vlasov equations (with common noise) in \cite{ren2019bismut}, and Huang and Ren in \cite{huang2021distribution}, and Huang in \cite{huang2023regularitiesexponentialergodicityentropy}. There have also been considerations in the context of machine learning by Gess and Konarovskyi \cite{gess2022conservativespdesfluctuatingmean} and by Gess, Kassing, and Konarovskyi \cite{gess2023stochasticmodifiedflowsmeanfield}, where the authors show that the measure-valued process induced from SDEs with interaction solves an SPDE arising in machine learning.

The investigation of reflected SPDEs was initiated by Nualart, Pardoux, and Donati-Martin in their groundbreaking works \cite{nualart1992white, donati1993white} by proving existence and uniqueness properties, first through deterministic obstacle problems \cite{nualart1992white} and then through a SPDE penalisation approach \cite{donati1993white}. The existence and uniqueness results were later generalised by Xu and Zhang \cite{xu2009white}, who also proved a large deviations principle for the equation. Properties of the semigroup were investigated by Zambotti \cite{zambotti2001reflected}, who showed that the SPDE with reflection admits an invariant measure represented by the Bessel bridge. Zhang proved the strong Feller property and a Harnack inequality in the case of non-functional coefficients \cite{zhang2010white}. The main technical tool for proving Theorem~\ref{thm:mainthm}, namely the method for showing regularity properties of semigroups induced by S(P)DEs, has been extensively developed by Wang (see, e.g., \cite{wang2014analysis} and many other works). The first application of this method to reflected SPDEs was carried out by Zhang in \cite{zhang2010white} to establish the Harnack inequality. The strong Feller property for reflected SPDEs was also shown there, but only in the case with non-functional coefficients.
 
\section{Setting and basic notation} \label{sec:setting}

Let $\PP_2(\R)$ be the space of all measures on $\R$ with finite second moment, equipped with the quadratic Wasserstein distance
\begin{align*}
    \gamma^2_2(\mu,\nu)= \inf_{\kappa \in C(\mu,\nu)} \int_{\R\times \R}  \abs{u-v}^2 \kappa(\dd u,\dd v),
 \end{align*}
and let $(\PP_2(\R),\gamma_2)$ denote the corresponding $2$-Wasserstein space.
In one spatial dimension it is a classical fact that 
the map 
\begin{align*}
    \chi: G  &\to \PP_2(\R), \\
    F^\mu &\mapsto \lambda \circ (F^\mu(\cdot))^{-1}=\mu,  \end{align*}
is a bijective isometry,
where $F^\mu$ is the generalized right-inverse CDF (quantile function)  of $\mu$ and  
\[
G=\{F\in \lp{2}([0,1]): F \text{ is non-decreasing}\}.
\]

Furthermore, let $\ccc_0([0,1])$ and $\lp{2}([0,1])$ denote the space of continuous functions with zero boundary values or square-integrable functions on $[0,1]$, equipped with the norms 
\begin{align*}
    \norm{f}_\infty&= \sup_{x\in[0,1]} \abs{f(x)},\\
    \norm{f}^2_{\lp{2}}&= \int_0^1 \abs{f(x)}^2 \dd x.
\end{align*}
 
We aim to regularise the measure-valued process induced by the following transport equation: 
\begin{align}\label{transport}
    \begin{cases}
        d\mu_t&= -\dive(b(\cdot,\mu_t)\mu_t)\dd t, \\
        \mu_0 &= \mu \in \PP_2(\R),
    \end{cases}
\end{align}
where $b: \R \times \PP_2(\R)\to \R$. Under mild regularity assumptions this equation turns out to take values in the nonlinear Wasserstein space, which via $\mu \mapsto F^\mu $ is mapped  isometrically to a convex subset of $\lp{2}([0,1])$.

In the first step, we derive an equation for the inverse CDF $(F^\mu_t)_{t\ge 0}$ corresponding to $(\mu_t)_{t\ge 0}$. Note that the transport equation \eqref{transport} corresponds to the following Lagrangian equation: 
\begin{align*}
   \begin{cases}
     dx_\mu(u,t) &= b(x_\mu(u,t),\mu_t)\dd t, \\ 
     x_\mu(u,0) &= u\in \R, \\
     \mu_t &= \mu \circ x^{-1}_\mu(\cdot,t).
   \end{cases}
\end{align*}
 
Under the assumption that $b$ is Lipschitz, these equations are well posed.

We now derive an equation for the inverse CDF $(F^\mu_t)_{t\ge 0}$ of $(\mu_t)_{t\ge 0}$. Let $\lambda$ be the Lebesgue measure on $[0,1]$. Then 
\begin{align*}
    \lambda \circ (F^\mu_t(\cdot))^{-1}
    &=\mu_t 
    = \mu \circ x^{-1}_\mu(\cdot,t)
    = \lambda \circ (F^\mu(\cdot))^{-1} \circ x^{-1}_\mu(\cdot,t)
    = \lambda\circ (x_\mu(F^\mu(\cdot),t))^{-1}.
\end{align*}
From regularity assumptions on $b$ we can deduce that $x_\mu(F^\mu(\cdot),t)$ is increasing, and thus, by uniqueness of the inverse CDF, we conclude $F^\mu_t(\cdot)= x_\mu(F^\mu(\cdot),t)$. Hence 
\begin{align*}
    \begin{cases}
    d F^\mu_t &= b(F^\mu_t,\mu_t)\dd t, \\     
    F_0^\mu&= F^\mu, \\
    \mu_t&= \lambda \circ (F^\mu_t(\cdot))^{-1}.
    \end{cases}
\end{align*}

Now that we have derived an equation for $(F^\mu_t)_{t\ge 0}$, we want to regularise it using noise. However, the regularised solution must remain increasing.  We therefore consider the equation for the derivative $\frac{\partial}{\partial u} F = g$, since regularisation at that level only requires enforcing positivity. This yields the equation
\begin{align*}
\begin{cases}
    d g_t &= b^\prime(F_t,\lambda \circ (F^\mu_t(\cdot))^{-1})\, g_t \dd t,\\
    g_0&= \frac{\partial}{\partial u}F^\mu = g^\mu.
\end{cases}    
\end{align*}
To go in inverse direction we write 
\begin{align*}
    F_t(u)-F_t(v)= \int_v^u g_t(r) \dd r,
\end{align*}
and thus  
\begin{align*}
    F_t(u) 
    &= \int_0^1 \int_v^u g_t(r)\, \dd r \dd v 
      + \int_0^1 F_t(v)\, \dd v
    =: A[(g_t,M_t)](u),
\end{align*}
where we define $M_t := \int_0^1 F_t(v)\dd v$. For later reference note that 
\begin{align}\label{AufleitungLip}
\begin{split}
    \abs{A[(\phi,x)](z)-A[(\psi,y)](z)}
    &\le \int_0^1 \int_y^z \abs{\phi(r)-\psi(r)} \dd r \dd y+\abs{x-y}\\
    &\le \norm{\phi-\psi}_{\lp{2}} +\abs{x-y}
    \le \norm{\phi-\psi}_\infty +\abs{x-y}.
\end{split}
\end{align}

As a consequence we obtain
\begin{align*}
   \begin{cases}
         d g_t &= b^\prime(A[(g_t,M_t)],\mu_t)\, g_t \dd t,\\
         d M_t&= \int_0^1 b(A[(g_t,M_t)](u),\mu_t)\dd u \, \dd t,\\
         g_0&=\phi\ge 0,\\
         M_0&= x, \\
         \mu_t&= \lambda \circ \left(A[(g_t,M_t)](\cdot)\right)^{-1},
   \end{cases}
\end{align*}
where $\phi\in \lp{2}([0,1])_{\ge 0}$ and $x\in \R$.

We now add noise to the equation in such a way that the derivative remains positive. This is achieved by means of the following reflected SPDE:

\begin{align}\label{System}
\begin{cases}
    \dd g_t&= \Delta g_t +b^\prime(A[(g_t,M_t)],\mu_t)\, g_t\dd t + \dd W_t+ \eta,\\
    \dd M_t &= \int_0^1 b(A([g_t,M_t](x)),\mu_t)\dd x\, \dd t + \dd B_t,\\
    \langle \eta, g\rangle &=0, \\
    (g_0,M_0)&= (\phi,x),\\
    g_t(0)&=g_t(1)=0,\\
    g_t&\ge 0,\\
    \mu_t&= \lambda \circ \left(A[(g_t,M_t)](\cdot)\right)^{-1},
\end{cases}    
\end{align}
where $dW$ is $L^2([0,1])$-space-time white noise and $B$ is a real Brownian motion independent of $W$.

\section{Existence and uniqueness} \label{sec:wellposed}
The existence and uniqueness result will follow from a more general result for locally Lipschitz coefficients with at most linear growth which we show in this section. Before proving this, we present two lemmas due to \cite{nualart1992white} which provide well-posedness and crucial estimates for a deterministic variational inequality that we will need. 
\begin{lemma}\label{hindernisproblem}
 Let $\Delta$ be the Dirichlet Laplacian  on $[0,1]$, and let $v:[0,1]\times\R_+ \to \R$ be continuous with respect to $(t,x)$ and $v(\cdot,t)\in \ccc_0([0,1])$ for all $t\ge 0$. Furthermore assume $v(0,x)\ge 0$ for all $x\in [0,1]$. Then there exists a unique pair $(z,\eta)$ such that:
 \begin{enumerate}[label=\roman*)]
     \item $z(0,t)=z(1,t)=0$, $z\ge -v$, and $z(x,0)=0$ for all $x\in [0,1]$;
     \item $\eta$ is a measure on $(0,1)\times\R_+$ such that 
     \[
     \eta([\eps,1-\eps] \times [0,T])<\infty
     \]
     for all $\eps,T>0$;
     \item for all $t\ge 0$ and $\phi\in \ccc^\infty_c((0,1))$,
     \begin{align*}
         \skalarq{z_t,\phi}_{\lp{2}} -\skalarq{z_0,\phi}_{\lp{2}}
         = \int_0^t \skalarq{z_s,\Delta\phi}_{\lp{2}} \dd s
         + \int_0^t\int_0^1 \phi(x)\, \eta(\dd x,\dd s);\\
     \end{align*}
     \item $\langle \eta, z+v\rangle =0$.
 \end{enumerate}
\end{lemma}
\begin{proof}
  Theorem 1.4 of \cite{nualart1992white}.
\end{proof}

\begin{lemma}\label{Hindernisabsch}
    In the situation of Lemma \ref{hindernisproblem}, let $v,\Tilde{v}\in \ccc(\R_+\times[0,1])$ such that $v(t),\Tilde{v}(t)\in \ccc_0([0,1])$, and consider the corresponding solutions $(z,\eta)$ and $(\Tilde{z},\Tilde{\eta})$. Then:
    \begin{enumerate}[label=\roman*)]
        \item For all $T>0$,
        \[
            \sup_{0\le t\le T} \norm{z_t}_{\infty}\le \sup_{0\le t \le T }\norm{v_t}_{\infty};
        \]
        \item For all $T>0$,
        \[
            \sup_{0\le t \le T} \norm{z_t-\Tilde{z}_t}_{\infty}
            \le \sup_{0\le t\le T} \norm{v_t-\Tilde{v}_t}_{\infty}.
        \]
    \end{enumerate}
\end{lemma}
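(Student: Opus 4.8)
The plan is to prove both bounds simultaneously via the penalisation scheme underlying Lemma~\ref{hindernisproblem}, observing that (i) is exactly the special case of (ii) with $\tilde v \equiv 0$: for the obstacle $0$ the unique solution of the problem is $\tilde z \equiv 0$, $\tilde \eta \equiv 0$, and the right-hand side of (ii) collapses to $\sup_{t\le T}\|v_t\|_\infty$. For the obstacle $-v$ I would work with the penalised equations
\begin{align*}
 \partial_t z^\eps = \Delta z^\eps + \tfrac1\eps (z^\eps + v)^-, \qquad z^\eps(\cdot,0)=0,
\end{align*}
with periodic boundary conditions, where $x^-=\max(-x,0)$, and analogously $\tilde z^\eps$ for the obstacle $-\tilde v$. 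From the construction in \cite{nualart1992white} (Theorem~1.4) these are energy solutions converging uniformly $z^\eps \to z$ and $\tilde z^\eps \to \tilde z$ as $\eps \to 0$, so it suffices to establish the two sup-norm bounds at the penalised level and then pass to the limit, where the measure-free structure makes the energy method clean.

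For (ii), set $w^\eps = z^\eps - \tilde z^\eps$ and $D = \sup_{t\le T}\|v_t - \tilde v_t\|_\infty$. The heart of the argument is a maximum-principle estimate on the super-level set $\{w^\eps > D\}$: testing the equation for $w^\eps$ against $(w^\eps - D)^+ \in H^1(\T)$ gives
\begin{align*}
 \tfrac12\tfrac{\dd}{\dd t}\big\|(w^\eps - D)^+\big\|_{L^2}^2 = -\big\|\nabla (w^\eps-D)^+\big\|_{L^2}^2 + \tfrac1\eps\int_0^1 (w^\eps-D)^+\big[(z^\eps+v)^- - (\tilde z^\eps+\tilde v)^-\big]\,\dd x,
\end{align*}
where the Laplacian term is $\le 0$ after integration by parts, with no boundary contribution by periodicity. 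On $\{w^\eps > D\}$ one has $w^\eps > D \ge v-\tilde v$, hence $(z^\eps+v)-(\tilde z^\eps+\tilde v)=w^\eps+(v-\tilde v)>0$; since $x\mapsto x^-$ is non-increasing, this forces $(z^\eps+v)^- - (\tilde z^\eps+\tilde v)^- \le 0$, so the penalisation term is $\le 0$. As $w^\eps(\cdot,0)=0$ and $D\ge 0$, integrating in time yields $(w^\eps-D)^+\equiv 0$, i.e.\ $z^\eps - \tilde z^\eps \le D$; exchanging the roles of the two solutions gives $\|z^\eps_t - \tilde z^\eps_t\|_\infty \le D$. Letting $\eps\to 0$ proves (ii), and specialising $\tilde v\equiv 0$ proves (i).

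The main obstacle is precisely the sign of the penalisation difference on the super-level set: everything hinges on the elementary but essential monotonicity observation $a>\tilde a \Rightarrow a^- \le \tilde a^-$, together with the fact that $w^\eps>D$ exactly compensates the gap $v-\tilde v$, so that the two contact forcings are correctly ordered. The remaining points are routine: the integration by parts uses periodicity, the energy identity with the test function $(w^\eps-D)^+$ is the standard chain rule for $L^2(0,T;H^1)$ functions, and the passage $\eps\to0$ only needs the uniform convergence $z^\eps\to z$ supplied by Lemma~\ref{hindernisproblem}. I would also note that the same estimate can be carried out directly on the limit problem by testing the weak formulation~(iii) with $(z-\tilde z-D)^+$ and using that $\eta$ is supported on the contact set $\{z=-v\}$ (where $z-\tilde z-D = -v-\tilde z-D \le -v+\tilde v-D \le 0$, so the test function vanishes) while $\tilde\eta\ge 0$ contributes with the favourable sign; the penalised route is preferable only because it avoids differentiating the measure term in time.
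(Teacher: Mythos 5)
Your argument is correct. Note that the paper gives no proof of its own here: it simply defers to Theorem~1.3 of \cite{nualart1992white}, so your penalisation argument is a genuine reconstruction rather than a paraphrase of the paper's route --- and it is, in substance, the classical Nualart--Pardoux comparison argument transplanted to the periodic setting. The decisive step, testing the equation for $w^\eps=z^\eps-\tilde z^\eps$ against the truncation $(w^\eps-D)^+$ and observing that on $\{w^\eps>D\}$ the ordering $z^\eps+v>\tilde z^\eps+\tilde v$ combined with the monotonicity of $x\mapsto x^-$ gives the penalisation difference the favourable sign, is exactly right, and the reduction of (i) to (ii) via $\tilde v\equiv 0$, $(\tilde z,\tilde\eta)=(0,0)$ is legitimate because the zero pair solves the obstacle problem for the zero obstacle and Lemma~\ref{hindernisproblem} provides uniqueness. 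Two small points deserve attention: the intermediate inequality should read $w^\eps>D\ge \tilde v-v$ (this is the direction that yields $w^\eps+(v-\tilde v)>0$; since $D$ dominates both signed differences nothing breaks, but as written the chain of inequalities does not literally imply the displayed conclusion), and the passage $\eps\to 0$ tacitly assumes that the solution of Lemma~\ref{hindernisproblem} is realised as the uniform limit of precisely this penalised scheme --- true for the construction in \cite{nualart1992white}, but worth saying explicitly, especially as the lemma as stated in the paper omits the support condition $\int (z+v)\,\dd\eta=0$ that underlies uniqueness. Your closing observation that the estimate can be run directly on the limit problem, using that $\eta$ lives on the contact set where the test function vanishes while $\tilde\eta\ge0$ contributes with the good sign, is also sound and arguably closer to how the comparison principle for variational inequalities is usually stated; the penalised route you chose has the advantage of avoiding any chain rule for the measure-driven equation.
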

\begin{proof}
     Theorem 1.3 in \cite{nualart1992white}.
\end{proof}

We now prove existence and uniqueness for a more general class of coefficients. First, we define what we mean by a solution to systems of type \eqref{System}. To this aim, we consider the class of systems for $(g,M, \eta)$
\begin{align}\label{System2}
    \begin{cases}
        \dd g_t &= \Delta g_t + a(g_t,M_t)\dd t + \dd W_t + \dd \eta, \\
        \dd M_t &=  \Tilde{a}(g_t,M_t)\dd t + \dd B_t,\\
        \langle \eta, g\rangle &=0\\
        g_0&\ge 0,\quad g_0(0)=g_0(1)=0,\quad g_t(0)=g_t(1)=0, \\
        M_0&\in \R,
    \end{cases}
\end{align}
where $a: \Omega \times \ccc(\T ) \times \R\times [0,\infty) \to \ccc_0([0,1])$ and $\Tilde{a}:\Omega \times \ccc_0([0,1]) \times \R \times [0,\infty) \to \R$ are locally Lipschitz with at most linear growth for all $\omega\in \Omega$. That is,
\begin{align}\label{loklip}
\begin{split}
    \norm{a(\omega,\phi,x,t)-a(\omega,\psi,y,t)}_\infty
    &\le C_n(T)(\norm{\phi-\psi}_\infty + \abs{x-y}), \\
    \abs{\Tilde{a}(\omega,\phi,x,t)-\Tilde{a}(\omega,\psi,y,t)}
    &\le C_n(T) (\norm{\phi-\psi}_\infty + \abs{x-y}),
\end{split}
\end{align}
for all $\omega \in \Omega$, $T>0$, and $t\le T$, whenever $\norm{\phi-\psi}_\infty + \abs{x-y}\le n$. Furthermore, we assume the coefficients to be of at most linear growth:
\[
    \norm{a(\omega,\phi,x,t)}_\infty + \abs{\Tilde{a}(\omega,\phi,x,t)}
    \le C(T)(1+\norm{\phi}_\infty)
\]
for all $\omega\in \Omega$ and $t\le T$.

\begin{definition}\label{Loesung}
    A triple $(g,M,\eta)$ is called a solution to \eqref{System2} if: 
    \begin{enumerate}[label=\roman*)]
        \item $(g,M)= \{(g_t(u),M_t): (u,t)\in [0,1] \times \R_+ \} $ is a continuous adapted process, where $g$ is nonnegative with $g_t(0)=g_t(1)=0$; 
        \item $\eta(\dd x,\dd t)$ is a random measure on $[0,1]\times \R_+$
        such that $\eta((\eps,1-\eps)\times [0,T])<\infty$ almost surely for all $T,\eps> 0$, and $\eta$ is adapted (i.e.\ $\eta(B)$ is $\F{}{t}$-measurable whenever $B\in \mathcal{B}([0,1]\times[0,t])$); 
        \item For all $t\ge 0$ and $\phi\in \ccc_c((0,1))$,
        \begin{align*}
            \skalarq{g_t,\phi}-\int_0^t \skalarq{g_s,\Delta\phi}\dd s
            &+ \int_0^t \skalarq{a(g_s,M_s),\phi}\dd s
            = \skalarq{g_0,\phi}
            +\int_0^t\int_0^1 \phi(x)\,W(\dd x,\dd s)\\
            &\qquad +\int_0^t\int_0^1 \phi(x)\,\eta(\dd x,\dd s) 
            \quad\text{a.s.},\\
            M_t&= M_0 + \int_0^t  \Tilde{a}(g_s,M_s)\dd s + B_t;
        \end{align*}
        \item $\displaystyle \int_Q g \dd \eta= 0$, where $Q=[0,1]\times \R_+$.
     \end{enumerate}
\end{definition}
\begin{theorem}\label{Wohlgestellt} 
   There exists a unique solution $(g,M,\eta)$ to \eqref{System2} with initial condition $(\phi,x)\in \ccc_0([0,1])_{\ge 0}\times \R$  such that $g_t\in \ccc_0([0,1])_{\ge 0}$ almost surely, if
   \begin{enumerate}[label=\roman*)]
       \item $a$ and $\Tilde{a}$ are locally Lipschitz in the sense of \eqref{loklip};
       \item $a$ and $\Tilde{a}$ are of at most linear growth,
   \end{enumerate} 
  holds.
\end{theorem}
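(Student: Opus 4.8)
The plan is to combine the Nualart--Pardoux decomposition of a reflected heat SPDE with a Banach fixed point argument for the coupled $(g,M)$ system, after first reducing to globally Lipschitz coefficients. Since $\alpha,a$ are only locally Lipschitz, I would truncate: for each $n$ replace them by $\alpha_n,a_n$ that agree on $\{\norm{\phi}_\infty + \abs{x} \le n\}$ and are globally Lipschitz, which is possible by \eqref{loklip}. It then suffices to solve each truncated system and afterwards remove the truncation by a localisation argument. Throughout, the noises $W,B$ and the initial datum are kept fixed, so that they cancel in all difference estimates.

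For the globally Lipschitz problem the key device is the decomposition $g = v + z$. Given a frozen pair $(\bar g,\bar M)$ of continuous adapted processes, let $v$ solve the linear (non-reflected) equation $\dd v = \Delta v\,\dd t + \alpha(\bar g_t,\bar M_t)\,\dd t + \dd W$ with $v_0 = g_0$; on the one-dimensional torus the stochastic convolution against space-time white noise admits a version in $\ccc([0,T];\ccc(\T))$, so $v$ is continuous, periodic and adapted with $v_0 = g_0 \ge 0$. Feeding $v$ into Lemma \ref{hindernisproblem} with obstacle $-v$ produces the unique pair $(z,\eta)$, and comparing the two variational formulations shows that $g := v + z$ solves the reflected equation with drift $\alpha(\bar g,\bar M)$; the constraint $z \ge -v$ forces $g \ge 0$, $z$ inherits continuity, and the contact property of the obstacle problem transports to the complementarity condition $\int_Q g\,\dd\eta = 0$ of Definition \ref{Loesung}. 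Setting simultaneously $M_t = M_0 + \int_0^t a(\bar g_s,\bar M_s)\,\dd s + B_t$ defines a solution map $\Phi:(\bar g,\bar M)\mapsto(g,M)$ on $\ccc([0,T];\ccc(\T))\times\ccc([0,T];\R)$.

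The heart of the argument is the contraction of $\Phi$ for small $T$. For two inputs the difference $v^1 - v^2$ solves a deterministic inhomogeneous heat equation with zero initial data and forcing $\alpha(\bar g^1,\bar M^1) - \alpha(\bar g^2,\bar M^2)$; since the periodic heat semigroup is a sup-norm contraction and $\alpha$ is Lipschitz in the sense of \eqref{loklip}, I obtain
\[ \sup_{t\le T}\norm{v^1_t - v^2_t}_\infty \le C_n(T)\,T\,\sup_{t\le T}\big(\norm{\bar g^1_t - \bar g^2_t}_\infty + \abs{\bar M^1_t - \bar M^2_t}\big). \]
Lemma \ref{Hindernisabsch} ii) then gives $\sup_{t\le T}\norm{z^1_t - z^2_t}_\infty \le \sup_{t\le T}\norm{v^1_t - v^2_t}_\infty$, so that $g^1 - g^2$ and, by the same Lipschitz bound, $M^1 - M^2$ are controlled by $C_n(T)\,T$ times the distance of the inputs. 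Choosing $T$ small enough that this constant is $<1$ makes $\Phi$ a contraction, and Banach's theorem yields a unique fixed point, i.e.\ a unique solution on $[0,T]$; as the contraction time depends only on $C_n(T)$ and not on the initial data, concatenation extends it uniquely to all of $\R_+$.

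Finally I would remove the truncation. Writing $(g^n,M^n,\eta^n)$ for the $n$-truncated solution and $\tau_n = \inf\{t:\norm{g^n_t}_\infty + \abs{M^n_t}\ge n\}$, uniqueness forces the solutions to agree up to $\tau_n\wedge\tau_m$, so it remains to show $\tau_n\to\infty$ almost surely. Here Lemma \ref{Hindernisabsch} i) bounds $\sup_{t\le T}\norm{z_t}_\infty$ by $\sup_{t\le T}\norm{v_t}_\infty$, and the linear growth of $\alpha,a$ together with standard moment bounds for the stochastic convolution and Gronwall's inequality yields an a priori estimate on $\E\sup_{t\le T}(\norm{g_t}_\infty + \abs{M_t})$ uniform in $n$, ruling out explosion and producing the global solution. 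I expect the main obstacle to lie precisely in this bookkeeping: keeping the coupling between the reflected $g$-equation and the $M$-equation under control uniformly in the frozen drifts, and verifying that the glued pair $g = v+z$ genuinely satisfies \eqref{System2} in the sense of Definition \ref{Loesung}, including transporting the complementarity condition from the deterministic obstacle problem.
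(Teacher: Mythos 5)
Your proposal is correct and follows essentially the same route as the paper: both reduce the reflected equation to the deterministic obstacle problem of Lemma \ref{hindernisproblem} via the decomposition $g=v+z$ (with $v$ the mild solution of the non-reflected equation), control differences through the stability estimate of Lemma \ref{Hindernisabsch}, solve the globally Lipschitz case first, and then localise using the linear-growth a priori bound. The only difference is organisational --- you run a pathwise Banach fixed point on a small time interval and concatenate, while the paper iterates the same map as a successive approximation and proves convergence in $\lp{p}(\Omega,\ccc([0,1]\times[0,T])\times\R)$ via Green's function and Gr\"onwall estimates --- which does not change the substance of the argument.
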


\begin{proof}
   
    The proof is analogous to \cite{xu2009white} and is based on successive approximation. First, we assume global Lipschitzness to hold for $a$ and $\Tilde{a}$. Take $g_0\in \ccc_0([0,1])_{\ge 0}$, define  
    \begin{align*}
        f_t^1 (x)
        &= \int_0^1 G_t(x,y)g_0(y) \dd y 
        - \int_0^t\int_0^1 G_{t-s}(x,y)a(g_0,M_0)(y)\dd y \dd s \\
        &\quad + \int_0^t \int_0^1 G_{t-s}(x,y)W(\dd y,\dd s),
    \end{align*}
    where $G$ denotes the Green function of $\Delta$ with Dirichlet boundary conditions.

    Then $f^1$ solves the following equation (see, e.g., \cite{da2014stochastic}):
    \begin{align*}
        d f^1_t(x)= \Delta f^1_t(x) + a(g_0,M_0)\dd t + \dd W(x,t), \qquad f_0=g_0.
    \end{align*}
    Moreover, let $(z^1,\eta^1)$ be the solution according to Lemma \ref{hindernisproblem} with $v=f^1_t$, note that $f^1$ is continuous. Then 
    $g_t^1 = f_t^1 + z^1$ solves 
    \begin{align*}
        d g_t^1(x)=  \Delta g^1_t(x) +a(g_0,M_0)\dd t + \dd W(x,t)+ \eta^1, 
        \qquad g^1_0=g_0. 
    \end{align*}
    Furthermore, let 
    \begin{align*}
            \dd M_t^1&= \Tilde{a}(g_t^1, M_t^1)\dd t +\dd B_t\\
              M_0^1 &=M_0,
    \end{align*}
    where the solution exists uniquely since we assumed that the coefficients are Lipschitz and bounded. Now let $f^n$ be defined as
    \begin{align*}
        f^n_t (x)
        &= \int_0^1 G_t(x,y)g_0(y
        ) \dd y 
        - \int_0^t\int_0^1 G_{t-s}(x,y)a(g_s^{n-1},M^{n-1}_s)(y) \dd y \dd s \\
        &\quad + \int_0^t \int_0^1 G_{t-s}(x,y)W(\dd y,\dd s),
    \end{align*}
    and let $(z^n,\eta^n)$ be a solution according to Lemma \ref{hindernisproblem} with $v=f^n$ where we note that $f^n$ is continuous.
    Then $g^n_t=f_t^n+z^n_t$ solves 
    \begin{align*}
       d g_t^n(x)=  \Delta g^n_t(x) +a(g_t^{n-1},M_t^{n-1})\dd t + \dd W(x,t)+ \eta^n, 
       \qquad g^n_0=g_0,
    \end{align*}
    
    and 
    \begin{align}
    \begin{split}
       dM^n_t&= \Tilde{a}(g_t^n,M_t^n)\dd t +\dd B_t.\\
       M_0 &= M_0
       \end{split}
    \end{align}

    To show that these sequences converge we use Lemma \ref{Hindernisabsch} to find that 
    \begin{align*}
       \sup_{0\le t\le T}\norm{z^n_t-z_t^{n-1}}_\infty 
       \le \sup_{0\le t\le T} \norm{f^n_t-f^{n-1}_t}_\infty,
    \end{align*} 
    hence 
    \begin{align*}
        \sup_{0\le t\le T}\norm{g^n_t-g_t^{n-1}}_\infty 
        \le C\left(\sup_{0\le t\le T} \norm{f_t^n-f_t^{n-1}}_\infty\right).
    \end{align*}
    Furthermore, 
\begin{align*}
        \E\Big(\sup_{0\le t\le T}\abs{M_t^{n}-M_t^{n-1}}^p\Big)         \le
        C \left(\int_0^T \E\big(\abs{M^{n}_t-M_t^{n-1}}^p\big) \dd t 
        + \int_0^T \E\big(\norm{g^n_t-g^{n-1}_t}_{\infty}^p\big) \dd t  \right).
\end{align*}

    Hence, by Grönwall's inequality we obtain 
    \begin{align*}
         \E\Big(\sup_{0\le t\le T}\abs{M_t^{n}-M_t^{n-1}}^p\Big)
         \le C\int_0^T \E\big(\norm{g^n_t-g^{n-1}_t}_{\infty}^p\big) \dd t.  
    \end{align*}

    Therefore, we can now resume
    \begin{align*}
        &\E\Big(\sup_{0\le t \le T} \norm{g^n_t-g_t^{n-1}}_{\infty}^p\Big) \\
        \le\;& C \E\left(\sup_{x\in [0,1],\,0\le t\le T} 
        \abs{\int_0^t\int_0^1 G_{t-s}(x,y)\big[a(g^{n-1}_s),M^{n-1}_s)
        -a(g^{n-2}_s),M^{n-2}_s)\big]\dd y\dd s}^p\right).
    \end{align*}

    Let $p>1$ be large enough such that its conjugate exponent $q<3$.  Then 
    \begin{align*}
        \E\Big(\sup_{0\le t \le T} \norm{g^n_t-g_t^{n-1}}^p_\infty\Big)
        &\le C \E\left(\sup_{x\in [0,1],\,0\le t\le T} 
        \abs{\int_0^t \int_0^1 G^q_s(x,y) \dd y \dd s} \right)^{\frac{p}{q}} \\
        &\quad \times \E\left(\int_0^T  \norm{g_t^{n-1}-g_t^{n-2}}_\infty^p 
        +\abs{M_t^{n-1}-M_t^{n-2}}^p \dd t\right) \\
        &\le C \E\left(\int_0^T  \norm{g_t^{n-1}-g_t^{n-2}}_\infty^p \dd t\right).
    \end{align*}

    Hence, one can now prove that $(g^n_t,M_t^n)$ converges in $\lp{p}(\Omega, \ccc(\T \times [0,T])\times \R)$. Denote the limits by $(g_t,M_t)$. We now show that we actually have a solution to \eqref{System}. First, note that since $g^n_t(x)\ge 0 $ almost surely, we obtain $g_t(x)\ge 0$ almost surely. For any $\phi\in\ccc_0([0,1])$ and $n\in \N$ we have 
    \begin{align*}
        \skalarq{g^n_t,\phi} 
        &-\int_0^t\skalarq{g^n_s,\Delta \phi} \dd s \\
        &\quad +\int_0^t \skalarq{a(g^{n-1}_s,M^{n-1}_s),\phi} \dd s\\
        &= \skalarq{g_0,\phi}
        + \int_0^t\int_0^1 \phi(x) W(\dd x,\dd s)
        + \int_0^t\int_0^1 \phi(x)\eta^n(\dd x,\dd s).
    \end{align*}

    Since the left-hand side converges as $n\to \infty$, we obtain that $\eta^n$ converges to a positive distribution, thus making it a measure. Therefore, one can show iii) in Definition \eqref{Loesung}. Property iv) can be proven in exactly the same way as in \cite{xu2009white}, Theorem 2.1. Note that the solution satisfies the following bound:
    \begin{align*}
        \E\Big(\sup_{0\le t\le T} \norm{g_t}^p_\infty+ \abs{M_t}^p\Big)\le C,
    \end{align*}
    since we assume the coefficients to be of at most linear growth. Hence, one can proceed by a standard localisation argument to deduce the existence result for locally Lipschitz coefficients.

    We shall now prove uniqueness. Let $(g^1,M^1,\eta^1)$ and $(g^2,M^2,\eta^2)$ be solutions to \eqref{System2}, and define 
    \begin{align}\label{aufspaltung}
        f^i_t(x)
        &= \int_0^1 G_t(x,y)g^1_t(y) \dd y 
        + \int_0^t \int_0^1 G_{t-s}(x,y)a(g^1_s,M^1_s)(y) \dd y \dd s \\
        &\quad + \int_0^t \int_0^1G_{t-s}(x,y) W(\dd y,\dd s). 
    \end{align}
    Then $z^i= g^i-f^i $ is the unique solution to the problem in Lemma \ref{hindernisproblem} with $v^i= g^i$. Hence, setting $\tau_N:= \inf\{ t\ge 0: \norm{g^1_t}_\infty+\norm{g^2_t}_\infty+\abs{M^1_t}+\abs{M^2_t}\le N \}$, we obtain, similarly as above,
    \begin{align*}
        &\E\Big(\sup_{0\le t\le T\land \tau_N}\norm{g_t^1-g^2_t}_\infty^p 
        +\abs{M_t^1-M^2_t}^p\Big) \\
        \le\;& C \E\left(\int^{T\land \tau_N}_0
        \norm{f^1_t-f^2_t}_\infty^p
        +\norm{z^1_t-z^2_t}^p_\infty 
        + \abs{M^1_t-M^2_t}^p\dd t \right)\\
        \le\;& C  \E\left(\int_0^{T\land\tau_N}
        \norm{g^1_t-g^2_t}_\infty^p
        + \abs{M^1_t-M^2_t}^p\dd t\right).
    \end{align*}
    By Grönwall's inequality and letting $N\to \infty$, we get $g^1= g^2$ and  $M^1=M^2$ almost surely.
\end{proof}

\section{Strong Feller property} \label{sec:strongfellerprop}

In this section we discuss the following equation 
\begin{align}\label{System3}
\begin{cases}
    \dd g_t&= \Delta g_t +b^\prime(A[(g_t,M_t)],\mu_t) g_t \dd t + \dd W_t+\dd \eta,\\
    \dd M_t &= \int^1_0 b(A[(g_t,M_t)](z),\mu_t) \dd z \dd t + \dd B_t, \\
    (g_0,M_0)&= (\phi,x),\\
    g_t(0)&=g_t(1)=0,\; g_t\ge 0,\\
    \langle \eta, g\rangle &=0,
\end{cases}    
\end{align}
for $(\phi,x)\in \ccc_0([0,1])\times \R$. From now on, we denote by $(g^{\phi,x}_t,M^{\phi,x}_t)$ a solution to \eqref{System3} with initial condition $(\phi,x)\in \ccc_0([0,1]) \times \R$.

For that matter we require the following conditions on $b$.

\begin{assumption}\raisebox{\ht\strutbox}{\hypertarget{(A2)}{}} 
     For all $(u,\mu),(v,\nu)\in \R\times \PP_2(\R)$ there exists $C>0$ such that 
\begin{align*}
         \abs{b(u,\mu)-b(v,\nu)}+ \abs{b^\prime(u,\mu)-b^\prime(v,\nu)}\le C\bigl(|u-v|+ \gamma_2(\mu,\nu)\bigr)
     \end{align*}
     
\end{assumption}

\begin{assumption}\raisebox{\ht\strutbox}{\hypertarget{(A3)}{}} 
       For all $(u,\mu)\in \R\times \PP_2(\R)$, we have 
     \begin{align*}
         \abs{b(u,\mu)}+\abs{b^\prime(u,\mu)}\le C
     \end{align*}
     for some $ C>0$.
\end{assumption}

Under these assumptions, the coefficients of \eqref{System3} satisfy the assumptions of Theorem \ref{Wohlgestellt}, since by \eqref{AufleitungLip} we get 
 \begin{align*}
    &\abs{b^\prime(A[(\phi,x)](u),\lambda \circ A^{-1}([\phi,x])(\cdot))-b^\prime(A[(\psi,y)](u),\lambda \circ A^{-1}([\psi,y])(\cdot))}\\
    \le\; & C \left(\abs{A[(\phi,x)](u)-A[(\psi,y)](u)} + \left(\int_0^1 \abs{A[(\phi,x)](u)-A[(\psi,y)](u)}^2 \dd u \right)^{\frac{1}{2}}\right)\\
    \le\; & C(\norm{\phi-\psi}_\infty + \abs{x-y}).
 \end{align*} 
 Moreover, 
 \begin{align*}
     &\abs{\int_0^1 b(A[(\phi,x)](z),\lambda\circ (A[(\phi,x)](\cdot))^{-1}) \dd z - \int_0^1 b(A[(\psi,y)](z),\lambda\circ (A[(\psi,y)](\cdot))^{-1}) \dd z }\\
     \le\; & \int_0^1 \abs{ b(A[(\phi,x)](z),\lambda\circ (A[(\phi,x)](\cdot))^{-1})-b(A[(\psi,y)](z),\lambda\circ (A[(\psi,y)](\cdot))^{-1})} \dd z\\
     \le\; & C \int_0^1 \left(\abs{A[(\phi,x)](z)-A[(\psi,y)](z)} + \left(\int_0^1 \abs{A[(\phi,x)](u)-A[(\psi,y)](u)}^2 \dd u \right)^{\frac{1}{2}}\right) \dd z\\
     \le\; & C \left(\norm{\phi-\psi}_\infty + \abs{x-y}\right).
 \end{align*}
 As a consequence, we can deduce that the coefficients are locally Lipschitz in the sense of \eqref{loklip}, and by boundedness of $b$ and $ b^\prime$ they are also of at most linear growth. 
 Thus we have existence and uniqueness for the system \eqref{System3}.

We now want to prove the strong Feller property for the SPDE with reflection. In the case of non-functional coefficients, this has already been done in \cite{zhang2010white} via the coupling method, which we will also use. However, in our case we work with a functional dependence of the coefficients such that the arguments given there do not apply directly. We overcome this by a method of freezing the coefficients. To this aim we fix the solution $(g^{\phi,x}_t,M^{\phi,x}_t)$ and denote 
 \[
 b^\prime(A[(g^{\phi,x}_t,M^{\phi,x}_t)](z),\mu_t^{\phi,x})=\beta(\phi,x,t)(z),
 \]
with  
\[\mu_t^{\phi,x}= \lambda \circ \left(A[(g^{\phi,x}_t,M^{\phi,x}_t)](\cdot)\right)^{-1}.\]

 We then consider the penalised equations 
 \begin{align}\label{penalisierung}
     \begin{cases}
         \dd g^{\eps,\phi,x}_t &= \Delta g^{\eps,\phi,x}_t \dd t + \beta(\phi,x,t)g^{\eps,\phi,x}_t \dd t +\dd W(t)+\frac{1}{\eps} (g^{\eps,\phi,x}_t)^-, \\
         g^\eps_0(\phi) &= \phi,\\
         g^{\eps,\phi,x}_t(0)&=g^{\eps,\phi,x}_t(\phi )(1)=0.
     \end{cases}
 \end{align}
 The coefficients satisfy the conditions of Theorem 4.1 in \cite{donati1993white} by choosing $\beta(\phi,x,t)(z)y=f(z,t,\omega)(y)$. Therefore, by Theorem \ref{Wohlgestellt} we have, for all $p\ge 1$,
 \begin{align*}
     &\lim_{\eps \searrow 0} \norm{g_t^{\eps,\phi,x} - g^{\phi,x}_t}_\infty = 0 \fastsicher, \\
     &\lim_{\eps \searrow 0} \E\Big(\sup_{0\le t\le T}\norm{g^{\eps,\phi,x}_t-g^{\phi,x}_t}_\infty^p\Big) = 0.
 \end{align*}
Moreover, the solutions $g^{\eps,\phi,x}_t$ are unique. 
  Moreover, from \eqref{AufleitungLip} and Assumption \hyperlink{(A3)}{(A3)} it follows that

\begin{align}\label{betaLip}
\begin{split}
    &\abs{\beta(\phi,x,t)(z)-\beta(\psi,y,t)(z)}\\
    =\; & \bigg\vert b^\prime\left(A[(g_t^{\phi,x},M^{\phi,x}_t)](z)),\lambda \circ A^{-1}([g_t^{\phi,x},M_t^{\phi,x}])(\cdot)\right)\\
    &\quad -b^\prime\left(A([g_t^{\psi,y},M_t^{\psi,y}](z)),\lambda \circ A^{-1}([g_t^{\psi,y},M_t^{\psi,y}])(\cdot)\right)\bigg\vert\\
    \le\; & C \left(\abs{A([g_t^{\phi,x},M^{\phi,x}_t])(z)-A([g_t^{\psi,y},M_t^{\psi,y}])(z)}+ \norm{g^{\phi,x}_t-g^{\psi,y}_t}_{\lp{2}}\right) \\
    \le\; & C\big(\norm{g^{\phi,x}_t-g^{\psi,y}_t}_{\lp{2}}+\abs{M^{\phi,x}_t-M^{\psi,y}_t}\big)
    \end{split}
\end{align}
for all $z\in [0,1]$.  Moreover, 
\begin{align*}
    \abs{\beta(\phi,x,t)(z)}\le C \fastsicher
 \end{align*}
for all $\phi\in \ccc_0([0,1])_{\ge 0}$, $x\in \R$, $t\in [0,\infty)$, and $z\in [0,1]$.
     \begin{bemerkung}
         It is important to note, that freezing the non-local part of the non-linearity is essential. In \cite{donati1993white} the authors show how to approximate solutions to the reflected heat equation by penalised SPDEs but they only show this for local nonlinearities. The proof cannot be copied to the case of non-local coefficents, since the proof in \cite{donati1993white} is crucially based on monotonicity, which cannot be realised with non-local coefficients.
     \end{bemerkung}
\begin{lemma}\label{Standardabsch}
    Let $\phi, \psi\in \ccc_0([0,1])_{\ge 0}$ and $x,y\in \R$. Then we have 
    \begin{enumerate}[label= \roman*)]
        \item For all $T>0$,
        \begin{align*}
            \E\Big(\sup_{0\le t \le T} \
            \norm{g^{\phi,x}_t}^p_{\lp{2}} \dd t \Big)\le C\big(1+\norm{\phi}^p_{\lp{2}}\big).
        \end{align*}
        \item For all $T>0$, and for $\tau^p_N= \inf\{t\ge 0:  \norm{g^{\phi,x}_t}^2_{\lp{2}}\ge N  \}$, 
        \begin{align*}
           &\E\Big(\sup_{0\le t \le T\land \tau_N} \norm{g^{\phi,x}_t-g^{\psi,y}_t}_{\lp{2}}^p+ \abs{M^{\phi,x}_t-M^{\psi,y}_t}^p \dd t \Big)\\
           \le\; & C\exp\Big(\frac{p}{2}NT\Big)\big(\abs{x-y}^p+\norm{\phi-\psi}^p_{\lp{2}}\big).       
        \end{align*}
    \end{enumerate}
\end{lemma}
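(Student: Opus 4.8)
The plan is to establish both bounds by an energy argument, the decisive structural feature being that the reflection is \emph{dissipative}: by the complementarity relation $\int_Q g\,\dd\eta=0$ of Definition \ref{Loesung} iv) together with $g\ge0$ and $\eta\ge0$, every contribution of $\eta$ obtained by testing the equation against the solution turns out to be nonpositive.

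For i) I would avoid a direct application of Itô's formula to $\norm{g_t}_{\lp{2}}^2$ (whose trace correction is infinite for space-time white noise) and instead work from the mild form of \eqref{System3}. Writing $P_t=e^{t\Delta}$ for the periodic heat semigroup and splitting $g_t=f_t+z_t$ with
\[ f_t = P_t\phi + \int_0^t P_{t-s}\big(\beta(\phi,x,s)\,g_s\big)\,\dd s + \int_0^t P_{t-s}\,\dd W_s, \]
the part $z_t=g_t-f_t$ solves the obstacle problem of Lemma \ref{hindernisproblem} with obstacle $-f_t$. Since $P_t$ is an $\lp{2}$-contraction, $\norm{P_t\phi}_{\lp{2}}\le\norm{\phi}_{\lp{2}}$; boundedness of $b'$ from assumption \hyperlink{(A3)}{(A3)} yields $\norm{\beta(\phi,x,s)\,g_s}_{\lp{2}}\le C\norm{g_s}_{\lp{2}}$, so the drift term is controlled by $C\int_0^t\norm{g_s}_{\lp{2}}\,\dd s$; and the stochastic convolution has finite $p$-th $\lp{2}$-moments on $[0,T]$ by the factorisation method, independently of $\phi$. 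Controlling the reflection part through the obstacle comparison of Lemma \ref{Hindernisabsch} i) and then applying Grönwall's inequality to $\E\sup_{s\le t}\norm{g_s}_{\lp{2}}^p$ produces the asserted bound $C(1+\norm{\phi}_{\lp{2}}^p)$.

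For ii) the essential device is \emph{synchronous coupling}: both solutions are driven by the same $W$ and the same $B$, so in the equations for the differences $g_t(\phi,x)-g_t(\psi,y)$ and $M_t(\phi,x)-M_t(\psi,y)$ the stochastic integrals cancel and the dynamics become pathwise. Abbreviating $D_t=\norm{g_t(\phi,x)-g_t(\psi,y)}_{\lp{2}}^2+\abs{M_t(\phi,x)-M_t(\psi,y)}^2$ and applying Itô's formula (now with no trace term), the Laplacian contributes $-2\norm{\nabla(g_t(\phi,x)-g_t(\psi,y))}_{\lp{2}}^2\le0$, while the reflection contributes $2\int(g(\phi,x)-g(\psi,y))\,\dd(\eta^1-\eta^2)$ (with $\eta^1,\eta^2$ the reflection measures of the two solutions), which is $\le0$ because $\int g(\phi,x)\,\dd\eta^1=\int g(\psi,y)\,\dd\eta^2=0$ while the remaining terms $-\int g(\psi,y)\,\dd\eta^1$ and $-\int g(\phi,x)\,\dd\eta^2$ are nonpositive by positivity. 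For the drift I would write $\beta(\phi,x,s)g(\phi,x)-\beta(\psi,y,s)g(\psi,y)=\beta(\psi,y,s)(g(\phi,x)-g(\psi,y))+(\beta(\phi,x,s)-\beta(\psi,y,s))g(\phi,x)$ and bound the first summand by $\norm{\beta}_\infty\le C$ from \hyperlink{(A3)}{(A3)} and the second by the pointwise Lipschitz estimate \eqref{betaLip}; up to $\tau_N$ the latter produces a factor $\norm{g_s(\phi,x)}_{\lp{2}}\le\sqrt N$. Combining this with the analogous bound for $M$, which follows from \eqref{AufleitungLip} and assumption \hyperlink{(A2)}{(A2)}, gives $\tfrac{\dd}{\dd t}D_t\le CN\,D_t$ on $[0,T\wedge\tau_N]$; since no martingale term is present, Grönwall's inequality applies pathwise, and taking the $p/2$-th power followed by expectation yields the exponential factor $\exp(\tfrac p2 NT)$ and the stated estimate.

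I expect the main obstacle to be the rigorous justification of these energy identities in the presence of the reflection \emph{measure} $\eta$, since the solution itself is not an admissible test function in Definition \ref{Loesung} iii) and $\eta$ carries no a priori regularity. I would resolve this via the penalised approximations \eqref{penalisierung}, in which $\eta$ is replaced by $\tfrac1\eps(g^\eps)^-$: because $r\mapsto-r^-$ is nondecreasing, one has $\skalarq{g^{1,\eps}-g^{2,\eps},\tfrac1\eps((g^{1,\eps})^--(g^{2,\eps})^-)}_{\lp{2}}\le0$, which is precisely the approximate dissipativity of the reflection, and the convergence from Theorem \ref{Wohlgestellt} then legitimises the inequalities above in the limit $\eps\searrow0$. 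The secondary difficulty, namely the infinite Itô trace of the white noise, is exactly what forces the mild-formulation route in i); in ii) it does not arise at all, which is the structural advantage of the synchronous coupling.
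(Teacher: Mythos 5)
Your part ii) is essentially the paper's own argument: synchronous coupling, penalisation with the monotonicity of $r\mapsto -r^-$ killing the reflection contribution, the drift split $\beta(\psi,y)(g(\phi,x)-g(\psi,y))+(\beta(\phi,x)-\beta(\psi,y))g(\phi,x)$ bounded via \hyperlink{(A3)}{(A3)} and \eqref{betaLip}, Young's inequality, localisation at $\tau_N$, and a pathwise Gr\"onwall followed by the $p/2$-th power. That half is fine.

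Part i) is where you deviate, and your route has a genuine gap: it does not deliver the asserted dependence on $\norm{\phi}_{\lp{2}}$. In your splitting $g=f+z$ the only available control on the reflection part is Lemma \ref{Hindernisabsch}~i), which is a \emph{sup-norm} comparison, $\sup_t\norm{z_t}_\infty\le\sup_t\norm{f_t}_\infty$; there is no $L^2$ analogue in the paper, and one cannot get one cheaply (testing the obstacle problem with $z$ gives $\int z\,\dd\eta\ge 0$, the wrong sign). Since the initial condition must sit inside the obstacle $f$ (because $z(\cdot,0)=0$), you have $\sup_t\norm{f_t}_\infty\ge\norm{P_0\phi}_\infty=\norm{\phi}_\infty$, so your chain of estimates produces a bound of the form $C(1+\norm{\phi}_\infty^p)$, not $C(1+\norm{\phi}_{\lp{2}}^p)$. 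This matters downstream: the lemma is invoked uniformly over $L^2$-balls $\{\norm{\phi}_{\lp{2}}\le M\}$ in the proofs of Theorems \ref{Hauptresultat} and \ref{StarkeFellerErw}, and $\norm{\phi}_\infty$ is not controlled there. The paper avoids this by a different decomposition: it \emph{freezes} the coefficient $\beta(\phi,x,\cdot)$ and compares $g(\phi,x)$ with the reflected solution $\Tilde g(0)$ of the same frozen-coefficient equation started at $0$ and driven by the same noise. For the penalised versions the difference satisfies a noise-free energy inequality in which the Laplacian and the penalty are dissipative and the frozen drift is Lipschitz with a constant independent of $(\phi,x)$, giving the pathwise $L^2$ contraction $\sup_t\norm{g_t(\phi,x)-\Tilde g_t(0)}_{\lp{2}}\le C\norm{\phi}_{\lp{2}}$; only the reference solution $\Tilde g(0)$ is then estimated by your mild-form/obstacle-comparison argument, and since its initial datum is $0$ that bound is a constant independent of $(\phi,x)$. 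If you want to keep your structure, you must insert this frozen-coefficient comparison step; as written, part i) proves a weaker statement than the one used later.
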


\begin{proof}
   To prove i), fix $(\phi,x)\in \ccc_0([0,1])_{\ge 0}\times \R$ and $\beta(\phi,x,t)$. Consider the penalised equation \eqref{penalisierung}. We denote the solutions by $\Tilde{g}^{\eps,\psi}_t$ for $\psi\in \ccc_0([0,1])_{\ge 0}$.
   More precisely we consider 
   \begin{align*}
        \begin{cases}
         \dd \Tilde{g}^{\eps,\psi}_t &= \Delta g^{\eps,\psi,y}_t \dd t + \beta(\phi,x,t)g^{\eps,\psi}_t \dd t +\dd W(t)+\frac{1}{\eps} (g^{\eps,\psi,y}_t)^-, \\
         \Tilde{g}^{\eps,\psi}_0 &= \psi,\\
         \Tilde{g}^{\eps,\psi}_t(0)&=\Tilde{g}^{\eps,\phi,x}_t(\psi )(1)=0.
     \end{cases}
   \end{align*}
   
   Note that $\Tilde{g}^{\eps,\phi}_t=g^{\eps,\phi,x}_t$, but  $g^{\eps,\psi,y}_t\neq \Tilde{g}^{\eps,\psi}_t$ for  $\psi \neq \phi$, in general. 
   We can thus estimate 
   \begin{align*}
       &\norm{\Tilde{g}^{\eps,\phi}_t-\Tilde{g}^{\eps,\psi}_t}^2_{\lp{2}}\\
       =&\norm{\phi-\psi}^2_{\lp{2}} +2\int_0^t \skalarq{\beta(\phi,x,s)\Tilde{g}^{\eps,\phi}_s-\beta(\phi,x,s)\Tilde{g}^{\eps,\psi}_s,\Tilde{g}^{\eps,\phi}_s-\Tilde{g}^{\eps,\psi}_s}\dd s \\
       &- 2\int_0^t \norm{\nabla(\Tilde{g}^{\eps,\phi}_s-\Tilde{g}^{\eps,\psi}_s)}^2 \dd s\\
       &+ \frac{1}{\eps}\int_0^t \skalarq{(\Tilde{g}^{\eps,\phi}_s)^{-}-(\Tilde{g}^{\eps,\psi}_s)^{-},\Tilde{g}^{\eps,\phi}_s-\Tilde{g}^{\eps,\psi}_s} \dd s\\
       \le & \norm{\phi-\psi}^2_{\lp{2}} + C\int_0^t \norm{\Tilde{g}^{\eps,\phi}_s-\Tilde{g}^{\eps,\psi}_s}^2_{\lp{2}} \dd s.
   \end{align*}
   Note that $C$ depends neither on $\phi$ nor on $x$ since $b$ and thus $\beta$ is bounded. Hence, by Grönwall's inequality we get

\begin{align*}
    \sup_{0\le t\le T }\norm{\Tilde{g}^{\eps,\phi}_t-\Tilde{g}^{\eps,\psi}_t}^2_{\lp{2}}\le C \norm{\phi-\psi}^2_{\lp{2}},
\end{align*}
   and by letting $\eps \searrow 0$ we also obtain 
   \begin{align*}
       \sup_{0\le t\le T }\norm{g^{\phi,x}_t-\Tilde{g}^{\psi}_t}^2_{\lp{2}}\le C \norm{\phi-\psi}^2_{\lp{2}}
   \end{align*}
 for all $x\in \R$, where $\Tilde{g}^{\psi}_t$ is the solution to 
   \begin{align*}
       \begin{cases}
           d\Tilde{g}^{\psi}_t &= \Delta \Tilde{g}^{\psi}_t \dd t+ \beta(\phi,x,t)\Tilde{g}^{\psi}_t\dd t + \dd W(t) + \Tilde\eta, \\
           \Tilde{g}_0(\psi)&=\psi\ge 0,  \\
           \Tilde{g}^{\psi}_t(0)&=\Tilde{g}^{\psi}_t(1)=0, \;\Tilde{g}^{\psi}_t\ge 0,\\
               \langle \Tilde \eta, \Tilde g \rangle & =0.
       \end{cases}
   \end{align*}

Now we get for $p\ge 1$
\begin{align*}
    \E\big( \sup_{0\le t \le T} \norm{g^{\phi,x}_t}^p_{\lp{2}}\big)
    &\le C\,\E\big( \sup_{0\le t \le T} (\norm{g^{\phi,x}_t-\Tilde{g}_t^0}^p_{\lp{2}}
    +\norm{\Tilde{g}_t^0}^p_{\lp{2}}),\big)\\
    &= C\E\big(\sup_{0\le t\le T} (\norm{\Tilde{g}^{\phi}_t-\Tilde{g}^0_t}^p_{\lp{2}}+\norm{\Tilde{g}}^0_t)\big)\\
    &\le C\norm{\phi}^p_{\lp{2}}+ C\,\E\big( \sup_{0\le t \le T} \norm{\Tilde{g}_t^0}^p_{\lp{2}}\big)
\end{align*}
for all $(\phi,x)\in\ccc_0([0,1])  \times\R$.
The estimate now follows by showing that
\begin{align*}
    \E\big( \sup_{0\le t \le T} \norm{\Tilde{g}_t^0}^p_{\lp{2}}\big)\le C,
\end{align*}
where $C>0$ is independent of $(\phi,x)$. To this aim, consider 
\begin{align*}
    f_t(z)= \int_0^t\int_0^1 G_{t-s}(z,y)\beta(\phi,x,s)(y) \Tilde{g}_s^0(y)\dd y \dd s
    + \int_0^t\int_0^1 G_{t-s}(z,y)W(\dd y,\dd s).
\end{align*}
Then $z_t:=\Tilde{g}^0_t-f_t$ is the solution $(z,\eta)$ with obstacle $(f_t)_{t\ge0}$ in Lemma \ref{hindernisproblem}. Hence, by Lemma \ref{Hindernisabsch}, we can estimate
\begin{align*}
    \E\big(\sup_{0\le t \le T} \norm{\Tilde{g}^{0}_t}^p_\infty\big)
    &\le C\, \E\big(\sup_{0\le t \le T }\norm{f_t}^p_\infty\big) \\
    &\le C \int_0^T \E\big(\norm{\Tilde{g}^0_t}^p_\infty\big) \dd t
    +C\,  \E\big(\sup_{0\le t \le T}\norm{W_\Delta(t)}^p_\infty\big)\\
    &\le C\left(1+ \int_0^T \E\big(\norm{\Tilde{g}^0_t}^p_\infty\big) \dd t\right),
\end{align*}
where $W_\Delta(t)$ is the Ornstein–Uhlenbeck induced from the Dirichlet Laplacian $\Delta$ on $[0,1]$.  The finiteness of the moments follows from Lemma 5.21 in \cite{da2014stochastic} and the Kolmogorov continuity criterion (e.g.\ Theorem 1.8.1 in \cite{kunita2019stochastic}). 
In fact,  by Lemma 5.21 in \cite{da2014stochastic} there exist constants $C>0$, $\gamma\in (0,1)$ such that, for all $t,s\ge 0$ and $x,y\in [0,1]$,
\begin{align*}
    \E\big(\abs{W_{\Delta}(x,t)-W_{\Delta}(y,s)}^2\big)\le C (\abs{x-y}^2 +\abs{t-s}^2)^{\frac{\gamma}{2}},
\end{align*}
by Gaussianity we can thus conclude for all $t,s\ge 0$ and $x,y\in [0,1]$ that
\begin{align*}
    \E\big(\abs{W_{\Delta}(x,t)-W_{\Delta}(y,s)}^{2m}\big)\le \Tilde{C} (\abs{x-y}^2 +\abs{t-s}^2)^{\frac{m\gamma}{2}} 
\end{align*}
for all $m\in \N$ and some constant $\Tilde{C}>0$ possibly depending on $m\in  \N$. Hence we get from Theorem 1.8.1 in \cite{kunita2019stochastic} that
\begin{align*}
    \E\big(\sup_{0\le t \le T} \sup_{x\in [0,1]}\abs{W_\Delta(t,x)}^p\big)\le C 
\end{align*}
for some constant $C>0$ and all $p\ge 1$. 
Grönwall's inequality implies the desired result. 

We now prove ii). Take $\phi,\psi\in \ccc_0([0,1])_{\ge 0}$ and $x,y\in\R$.
We consider the penalised problem \eqref{penalisierung} with varying coefficients $\beta$ depending on the initial condition. Denote two solutions by $g^{\eps,\phi,x}_t$ and $g^{\eps,\psi,y}_t$. Hence 
\begin{align*}
      \begin{cases}
         \dd g^{\eps,\phi,x}_t &= \Delta g^{\eps,\phi,x}_t \dd t + \beta(\phi,x,t)g^{\eps,\phi,x}_t \dd t +\dd W(t)+\frac{1}{\eps} (g^{\eps,\phi,x}_t)^-, \\
         g^{\eps,\phi,x}_0 &= \phi,\\
         g^{\eps,\phi,x}_t(0)&=g^{\eps,\phi,x}_t(\phi )(1)=0.
     \end{cases}
\end{align*} 
and for $(\psi,y)$ respectively.

Then we get, similarly as in the proof of i), using \eqref{betaLip},
\begin{align*}
       \dd\norm{g^{\eps,\phi,x}_t-g^{\eps,\psi,y}_t}^2_{\lp{2}}
       &\le \skalarq{\beta(\phi,x,t)g^{\eps,\phi,x}_t-\beta(\psi,y,t)g^{\eps,\psi,y}_t,g^{\eps,\phi,x}_t-g^{\eps,\psi,y}_t}\dd t \\ 
       &=\skalarq{\beta(\psi,y,t)(g^{\eps,\phi,x}_t-g^{\eps,\psi,y}_t),g^{\eps,\phi,x}_t-g^{\eps,\psi,y}_t}\dd t\\
       &\quad+\skalarq{(\beta(\phi,x,t)-\beta(\psi,y,t))g^{\eps,\phi,x}_t,g^{\eps,\phi,x}_t-g^{\eps,\psi,y}_t} \dd t \\
       &\le C \norm{g^{\eps,\phi,x}_t-g^{\eps,\psi,y}_t}^2_{\lp{2}} \dd t\\
       &\quad+ C \big(\norm{g^{\phi,x}_t-g^{\psi,y}_t}_{\lp{2}}+ \abs{M^{\phi,x}_t-M^{\psi,y}_t} \big)\\
       &\qquad\times \big(\norm{g^{\eps,\phi,x}_t (\phi)}_{\lp{2}}\, \norm{g^{\eps,\phi,x}_t-g^{\eps,\psi,y}_t}_{\lp{2}} \big)  \dd t .
\end{align*}
Now, by letting $\eps \searrow 0$ and applying Young's inequality to the last term, we obtain 
\begin{align*}
\dd\norm{g^{\phi,x}_t-g^{\psi,y}_t}^2_{\lp{2}}
&\le C \big(\norm{g^{\phi,x}_t-g^{\psi,y}_t}^2_{\lp{2}}+ \abs{M^{\phi,x}_t-M^{\psi,y}_t}^2 \big) \dd t \\
&\quad+ C \norm{g^{\phi,x}_t-g^{\psi,y}_t}^2_{\lp{2}} \norm{g^{\phi,x}_t}^2_{\lp{2}} \dd t.
\end{align*}
Furthermore, we have 
\begin{align*}
    \dd \abs{M^{\phi,x}_t-M^{\psi,y}_t}^2
    \le  C \big(\norm{g^{\phi,x}_t-g^{\psi,y}_t}^2_{\lp{2}} +\abs{M^{\phi,x}_t-M^{\psi,y}_t}^2 \big) \dd t.
\end{align*}
Therefore Grönwall's inequality implies 
\begin{align*}
    &\norm{g^{\phi,x}_t-g^{\psi,y}_t}^2_{\lp{2}} + \abs{M^{\phi,x}_t-M^{\psi,y}_t}^2
    \\\le& \exp\Big(C\big(t+\int_0^t \norm{g^{\phi,x}_s}^2_{\lp{2}}\dd s\big)\Big)(\abs{x-y}^2+\norm{\phi-\psi}^2_{\lp{2}}) 
\end{align*}
and thus 
\begin{align*}
    \sup_{0\le t\le T \land \tau_N} &\norm{g^{\phi,x}_t-g^{\psi,y}_t}^p_{\lp{2}} 
    + \abs{M^{\phi,x}_t-M^{\psi,y}_t}^p
    \\\le& C\exp\Big(\frac{p}{2}NT\Big)\big(\abs{x-y}^p +\norm{\phi-\psi}^p_{\lp{2}}\big).
\end{align*}
\end{proof}

\begin{theorem}\label{Hauptresultat}
    For all $\theta\in (0,1)$ there exists a constant $C>0$ such that for all $T>0$ and all $(\phi,x),(\psi,y)\in \ccc_0([0,1])_{\ge 0}\times \R$,
   \begin{align*}
       &  \ent\big(P_T(\cdot,(\psi,y))\vert P_T(\cdot,(\phi,x))\big) \\
       \le\; &  C\Bigg(\frac{1}{T\land 1}\left(\rho^2+\rho^{\theta}+\rho\right)+\left(\rho^{1+\theta}+\rho^2\right)
    +(1+\norm{\phi}^2_{\lp{2}})^{1+\theta}\Big(\frac{T\land1}{1-\theta}\Big)^{\theta}\big(\log(1+\rho^{-1})\big)^{-\theta}\Bigg) \end{align*}
   where $\rho^2= \abs{x-y}^2+\norm{\phi-\psi}^2$.
\end{theorem}

\begin{proof} We can assume without loss of generality that $0<T\le1$ since $(P_t)_{t\ge 0}$ satisfies the semigroup property (see Corollary \ref{Markov}) thus the entropy is decreasing with respect to $t>0$. To see this fix $T>0$ and $t<T$, then we can deduce from duality and with Jensen's inequality, we get  
\begin{align*}
    &\ent\big(P_T(\cdot,(\phi,x)),P_T(\cdot,(\psi,y))\big)\\
     =& \sup_{0<F\in B_b(\ccc_0([0,1])\times \R)} \Big((P_T \log F) (\phi,x) - \log\big((P_TF)(\psi,y)\big)\Big)\\
    =&  \sup_{0<F\in B_b(\ccc_0([0,1])\times \R)} \Big((P_t P_{T-t} \log F)(\phi,x) - \log\big(P_tP_{T-t}F(\psi,y)\big)\Big)\\
    \le&   \sup_{0<F\in B_b(\ccc_0([0,1])\times \R)} \Bigg(P_t (\log\big(P_{T-t} F))(\phi,x)\big) - \log\big(P_t (P_{T-t}F)(\psi,y)\big)\Bigg)\\
    \le& \sup_{0<F\in B_b(\ccc_0([0,1])\times \R)}\Bigg( P_t \log\big(F(\phi,x)\big) - \log\big(P_tF(\psi,y)\big)\Bigg)\\
    =&\ent\big(P_t(\cdot,(\phi,x)),P_t(\cdot,(\psi,y))\big)
\end{align*}
for all $(\phi,x),(\psi,y)\in \ccc_0([0,1])_{\ge 0}\times \R$.

As announced we use a coupling argument in combination with the Girsanov transform.  Let $\phi,\psi\in \ccc_0([0,1])_{\ge 0}$ and $x,y\in \R$. 
Consider the equation
\begin{align*}
    \begin{cases}
        \dd \Tilde{g}^{\eps,\psi,y}_t&= (\Delta \Tilde{g}^{\eps,\psi,y}_t + \beta(\phi,x,t)g^{\eps,\phi,x}_t)\dd t + \dd W(t) + \frac{1}{\eps} (\Tilde{g}^{\eps,\psi,y}_t)^- \dd t \\
       &\quad - \frac{g^{\eps,\phi,x}_t-\Tilde{g}^{\eps,\psi,y}_t}{\xi(t)}  
      \ind{t<T}\, \dd t,  \\ 
      \Tilde{g}^{\eps,\psi,y}_t(0)&=\Tilde{g}^{\eps,\psi,y}_t(1)=0\\
        \Tilde{g}^{\eps,\psi,y}_0 &= \psi,\\[0.3em]
        \dd \Tilde{M}^{\psi,y}_t &= \int_0^1 \Tilde{\beta}(\phi,x,t)(M^{\phi,x}_t)(z) \dd z \dd t +\dd B_t \\
        &\quad - \dfrac{\Tilde{M}^{\psi,y}_t-M^{\phi,x}_t}{\xi(t)}  \ind{t<T}\, \dd t,\\
        \Tilde{M}^{\psi,y}_0&=y,
   \end{cases}
\end{align*}
where, in order to keep the notation simple, we write 
\[
\Tilde{\beta}(\phi,x,t)(\rho)(z)= b(A([g^{\phi,x}_t,\rho])(z),\lambda \circ (A[g^{\phi,x}_t,\rho](\cdot))^{-1}).
\]
Here $g^{\phi,x}_t$ and $M^{\phi,x}_t$ are the components of the solution system $(g^{\phi,x}_t,M^{\phi,x}_t)$. We will simply write $M_t$ instead of $M^{\phi,x}_t$ whenever this does not cause ambiguity. Moreover, let $\xi(t)=T-t$. It is clear that a solution $(\Tilde{g}^{\eps,\psi,y}_t,\Tilde{M}^{\psi,y}_t)_{t\in [0,T)}$ exists, and we will see that the solution can be extended beyond $t\ge T$ by $(g^{\eps,\phi,x}_t,M_t)_{t\ge T}$.
Now, by the chain rule,
\begin{align*}
    &\norm{\Tilde{g}^{\eps,\psi,y}_t-g^{\eps,\phi,x}_t}^2_{\lp{2}}  \\
   =& \norm{\phi-\psi}^2_{\lp{2}} + \int_0^{t} 2\skalarq{\Tilde{g}^{\eps,\psi,y}_s-g^{\eps,\phi,x}_s,\Delta(\Tilde{g}^{\eps,\psi,y}_s-g^{\eps,\phi,x}_s)} \dd s\\
   &\quad + \int_0^{t} \frac{2}{\eps}\skalarq{\Tilde{g}^{\eps,\psi,y}_s-g^{\eps,\phi,x}_s,(\Tilde{g}^{\eps,\psi,y}_s)^--(g_s^\eps)^-}\dd s
   -\int_0^{t} \frac{2}{\xi(s)} \norm{\Tilde{g}^{\eps,\psi,y}_s-g^{\eps,\phi,x}_s}^2_{\lp{2}} \dd s\\
   \le& \norm{\phi-\psi}^2_{\lp{2}}- \int_0^{t} \frac{2}{\xi(s)} \norm{\Tilde{g}^{\eps,\psi,y}_s-g^{\eps,\phi,x}_s}^2_{\lp{2}} \dd s,
\end{align*}
and 
\begin{align*}
    \abs{\Tilde{M}^{\psi,y}_t-M^{\phi,x}_t}^2= \abs{x-y}^2 
    - 2\int_0^{t} \frac{\abs{M_s^{\phi,x}-\Tilde{M}_s^{\psi,y}}^2}{\xi(s)} \dd s.
\end{align*}

Therefore,
\begin{align*}
      \norm{\Tilde{g}^{\eps,\psi,y}_t-g^{\eps,\phi,x}_t}_{\lp{2}}^2
      &\le  \norm{\phi-\psi}_{\lp{2}}^2\exp\Big(-\int_0^t \frac{2}{\xi(s)} \dd s\Big),\\
     \abs{\Tilde{M}^{\psi,y}_t-M^{\phi,x}_t}^2
     &\le \abs{x-y}^2\exp\Big(-\int_0^t \frac{2}{\xi(s)}\dd s\Big).
\end{align*}

Now observe that $\int_0^T \frac{1}{\xi(t)} \dd t = \infty$. Therefore, we can extend $(\Tilde{g}^{\eps}_t)_{t\in [0,T)}$ and $(\Tilde{M}^{\psi,y}_t)_{t\in [0,T)}$ up to time $T$ and thus beyond by 
\begin{align*}
    g^{\eps,\phi,x}_t= \Tilde{g}^{\eps,\psi,y}_t, \quad M^{\phi,x}_t= \Tilde{M}^{\psi,y}_t \fastsicher
\end{align*}
for all $t\ge T$.

Moreover, notice that 
\begin{align*}
    \dd\frac{\norm{\Tilde{g}_t^{\eps,\psi,y}-g^{\eps,\phi,x}_t}^2_{\lp{2}}}{\xi(t)}
    \le -\frac{\norm{\Tilde{g}_t^{\eps,\psi,y}-g^{\eps,\phi,x}_t}_{\lp{2}}^2}{\xi^2(t)}\underbrace{(2+\xi^\prime(t))}_{=1}\dd t
    \le 0.
\end{align*}

Integrating the inequality and rearranging the terms yields
\begin{align*}
   \int_0^{T-\delta} \frac{\norm{\Tilde{g}_t^{\eps,\psi,y}-g^{\eps,\phi,x}_t}_{\lp{2}}^2}{\xi^2(t)} \dd t 
   \le \frac{\norm{\phi-\psi}_{\lp{2}}^2}{\xi(0)} -\frac{\norm{\Tilde{g}_{T-\delta}^\eps-g^\eps_{T-\delta}}_{\lp{2}}^2}{\xi(T-\delta)}
   \le \frac{\norm{\phi-\psi}_{\lp{2}}^2}{\xi(0)}.
\end{align*} 

for all $\delta >0$ and thus letting $\delta \searrow 0$
\begin{align*}
     \int_0^{T} \frac{\norm{\Tilde{g}_t^{\eps,\psi,y}-g^{\eps,\phi,x}_t}_{\lp{2}}^2}{\xi^2(t)} \dd t\le \frac{\norm{\phi-\psi}_{\lp{2}}^2}{\xi(0)}.
\end{align*}
In exactly the same way we obtain
\begin{align*}
\int_0^T \frac{\abs{M^{\phi,x}_t-\Tilde{M}^{\psi,y}_t}^2}{\xi^2(t)} \dd t \le \frac{\abs{x-y}^2}{\xi(0)}.
\end{align*}
Furthermore, we have 
\begin{align*}
    \dd \Tilde{g}^{\eps,\psi,y}_t&= \Delta \Tilde{g}^{\eps,\psi,y}_t + \beta(\psi,y,t) \Tilde{g}^{\eps,\psi,y}_t \dd t + \dd \Tilde{W}(t) + \frac{1}{\eps}(\Tilde{g}^{\eps,\psi,y}_t)^- \dd t, \\
    \dd \Tilde{M}^{\psi,y}_t &= \int_0^1 \Tilde{\beta}(\psi,y,t)(\Tilde{M}^{\psi,y}_t)(z) \dd z \dd t  +\dd \Tilde{B}_t,
 \end{align*}
 with $\Tilde{B}$ and $\Tilde{W}$ defined by 
 \begin{align*}
     \Tilde{W}^\epsilon(t)&= W(t) + \int_0^t\big(\beta(\phi,x,s)g_s^{\eps,\phi,x} - \beta(\psi,y,s)\Tilde{g}^{\eps,\psi,y}_s\big)\dd s - \int_0^t \frac{g^{\eps,\phi,x}_s - \Tilde{g}^{\eps,\psi,y}_s}{\xi(s)}  \dd s,\\
     \Tilde{B}^\epsilon_t &= B(t) + \int_0^t\int_0^1 \big(\Tilde{\beta}(\phi,x,s)(M_s^{\phi,x})(z)-\Tilde{\beta}(\psi,y,s)(\Tilde{M}_s^{\psi,y})(z)\big) \dd z \dd s - \int_0^t \frac{\Tilde{M}_s^{\psi,y}-M_s^{\phi,x}}{\xi(s)}  \dd s.  
 \end{align*}

We will now show  that $\Tilde{W}^\epsilon$ cylindrical $L^2([0,1])$-white noise and $\tilde B$ is an independent Brownian motion on $(\Omega,\F{}{},\Q^\epsilon)$, where  $\Q^\epsilon$ is suitable Girsanov transform of $\W$. To this aim, we note  that 
\begin{gather*}
    \exp\left(\int_0^T \frac{\norm{g^{\eps,\phi,x}_s-\Tilde{g}^{\eps,\psi,y}_s}^2_{\lp{2}}}{\xi^ 2(s)} \dd s\right)\le C, \\
    \intertext{and by the boundedness of $\Tilde{\beta}$}
    \exp\left(\int_0^T \frac{\abs{M_s^{\phi,x}-\Tilde{M}_s^{\psi,y}}^2}{\xi^ 2(s)}+ \abs{\int_0^1 \Tilde{\beta}(\phi,x,s)(z)-\Tilde{\beta}(\psi,y,s)(z)\dd z}^2 \dd s\right) \le C. 
\end{gather*}
Furthermore, since $\norm{g^{\eps,\phi,x}_t-\Tilde{g}^{\eps,\psi,y}_t}_{\lp{2}}\le \norm{\phi-\psi}_{\lp{2}}$, we have
 \begin{align}\label{Girsanov}
     \begin{split}
     &\E\left(\exp\Big(\int_{t_{i-1}}^{t_i} \norm{\beta(\phi,x,s)g_s^{\eps}-\beta(\psi,y,s)\Tilde{g}^{\eps,\psi,y}_s}_{\lp{2}}^2 \dd s\Big)\right)\\
     \le\;& \E \Bigg( \exp\Big(2\int^{t_i}_{t_{i-1}} \norm{(\beta(\phi,x,s)-\beta(\psi,   y,s))g_s^{\eps}}_{\lp{2}}^2\dd s
     \\
     +\;& 2\int_{t_{i-1}}^{t_i} \norm{\beta(\psi,y,s)(g_s^{\eps}-\Tilde{g}^{\eps,\psi,y}_s)}_{\lp{2}}^2 \dd s\Big) \Bigg)\\
     \le\;& C\,\E\left(\exp\Big(C\int^{t_i}_{t_{i-1}} \norm{g^{\eps,\phi,x}_s}_{\lp{2}}^2 \dd s\Big)\right),
     \end{split}
 \end{align}
for any partition $(t_i)_{i=0,\dots,n}$ of $[0,T]$.

Define 
 \begin{align}\label{Dichte}
 \begin{split}
     \gamma_s^\epsilon&:= \frac{g^{\eps,\phi,x}_s - \Tilde{g}^{\eps,\psi,y}_s}{\xi(s)}-\beta(\phi,x,s)g_s^\eps - \beta(\psi,y,s)\Tilde{g}^{\eps,\psi,y}_s,\\
     \Tilde{\gamma}_s&:= \frac{M_s^{\phi,x}-\Tilde{M}_s^{\psi,y}}{\xi(s)}- \int_0^1 \big(\Tilde{\beta}(\phi,x,s)(M_s^{\phi,x})(z)-\Tilde{\beta}(\psi,y,s)(\Tilde{M}_s^{\psi,y})(z)\big) \dd z.
     \end{split}
 \end{align}
If there exists a partition $(t_{i})_{i=0,\dots,n}$ of $[0,T]$ such that the last term in \eqref{Girsanov} is finite, we can proceed as in Proposition 19 in the Appendix of \cite{da2013strong}: starting from
\begin{align*}
    &\E\Bigg(\exp\Big(\int_{t_{i-1}}^{t_i}\skalarq{\gamma^\epsilon_s,\dd W(s)}-\frac{1}{2}\int^{t_i}_{t_{i-1}}\norm{\gamma^\epsilon(s)}^2_{\lp{2}} \dd s + \int_{t_{i-1}}^{t_i}\Tilde{\gamma}_s\dd B_s -\frac{1}{2}\int_{t_{i-1}}^{t_i} \abs{\Tilde{\gamma}_s}^2 \dd s \Big)\Bigg) = 1,
\end{align*}
letting
\[
\mathcal{E}_s^t(\gamma^\epsilon)= \exp\Big(\int_s^t \skalarq{\gamma^\epsilon_r,\dd W(r)}-\frac{1}{2}\int_s^t\norm{\gamma^\epsilon_r}^2_{\lp{2}}\dd r+ \int_s^{t}\Tilde{\gamma}_r\dd B_r -\frac{1}{2}\int_{s}^{t} \abs{\Tilde{\gamma}_r}^2 \dd r \Big),
\]
we obtain
\begin{align*}
    \E( \mathcal{E}_0^T ) &= \E(\mathcal{E}^T_{t_{n-1}}\dots \mathcal{E}_0^{t_1}) \\
    &=\E\Big(\underbrace{\E\big(\mathcal{E}^T_{t_{n-1}}\vert \F{}{t_{n-1}}\big)}_{=1}\dots \mathcal{E}_0^{t_1}\Big)\\
    &=\dots = 1. 
\end{align*}
In this case we obtain a well defined probability measure $\Q^\eps_{\vert_{\F{}{T}}}$ on $(\Omega, {\F{}{T}})$ via the Radon-Nikodym density 
\begin{align*}
\dd \Q^\epsilon_{\vert_{\F{}{T}}}= \exp\Big(\int_{0}^T\skalarq{\gamma^\epsilon_s,\dd W(s)}-\frac{1}{2}\int^{T}_{0}\norm{\gamma^\epsilon(s)}^2_{\lp{2}} \dd s + \int_{0}^{T}\Tilde{\gamma}_s\dd B_s -\frac{1}{2}\int_{0}^{T} \abs{\Tilde{\gamma}_s}^2 \dd s\Big) \dd \W .
\end{align*}

To find such a partition note that 
\begin{align*}
    g^{\eps,\phi,x}_t(z)&= \int_0^1 G_t(z,y) g_0(y)\dd y + \int_0^t \int_0^1 G_{t-s}(z,y)\beta(\phi,x,s)(y)g^{\eps,\phi,x}_s(y) \dd y \dd s\\
    &\quad+ \int_0^t \int_0^1 G_{t-s}(z,y)  W(\dd y,\dd s ) 
    +\frac{1}{\eps} \int_0^t \int_0^1 G_{t-s}(z,y)(g^{\eps,\phi,x}_s(y))^- \dd y \dd s.  
\end{align*}
Therefore, Grönwall's lemma yields
\begin{align*}
    \sup_{0\le t\le T} \norm{g^{\eps,\phi,x}_t}^2_{\lp{2}} \le C(\eps,T) + \sup_{0\le t\le T}\norm{W_\Delta(t)}^2_{\lp{2}},
\end{align*}
where $W_\Delta$ is the stochastic convolution related to $\Delta$. By Fernique's theorem, or more precisely by Proposition 18 in \cite{da2013strong}, there exists $\delta>0$ such that 
\begin{align*}
    \E\big(\exp\big(\delta\sup_{0\le t \le T }\norm{W_\Delta(t)}_{\lp{2}}^2\big)\big) <\infty.
\end{align*}

Hence there exists a partition $(t^n_{i})_{i=0,\dots,n}$, $n\in \N$, satisfying the needed conditions, and $\Q^\eps_{\vert_{\F{}{T}}}$ is in fact well defined. Moreover, from Girsanov's theorem we conclude that  $\Tilde{W}$ is $L^2([0,1])$-cylindrical white noise and $\Tilde{B}$ is an independent Brownian motion on $(\Omega,\F{}{T},\Q^\eps_{\F{}{T}})$. \\

 Let us estimate the entropy between $\Q^\eps_{\vert_{\F{}{T}}}$ and $\W_{\vert_{\F{}{T}}}$. Using the stopping time  $\tau_N(\phi):= \inf\{t\ge 0: \norm{g^{\phi,x}_t}^2_{\lp{2}} \ge N\}$ it holds that 
\begin{align*}
     &\ent(\Q_{\F{}{T}}^\eps\vert \W_{\F{}{T}}) \notag\\ \notag
    \le&C\E\left(\int^T_0 \norm{\beta(\phi,x,t)g_t^{\eps,\phi,x} - \beta(\psi,y,t)\Tilde{g}^{\eps,\psi,y}_t}_{\lp{2}}^2 \dd t \right)\\\notag
    &\quad+ \E\left(\int_0^T \frac{\norm{g_t^{\eps,\phi,x}-\Tilde{g}^{\eps,\psi,y}_t}_{\lp{2}}^2}{\xi(t)^2} \dd t + \int_0^T\abs{\int_0^1 \Tilde{\beta}(\phi,x,t)(M^{\phi,x}_t)(z)-\Tilde{\beta}(\psi,y,t)(\Tilde{M}^{\psi,y}_t)(z) \dd z}^2 \dd t\right) \\\notag
    &\quad+   \E\left(\int_0^T \frac{\abs{M^{\phi,x}_t-\Tilde{M}^{\psi,y}_t}^2}{\xi(t)^2} \dd t\right) \\\notag
    \le&   \frac{C}{T}\Big(\norm{\psi-\phi}^2_{\lp{2}}+ \abs{x-y}^2\Big) \\
    &\quad+ C\Bigg[
    \E\left(\int^T_0 \norm{\beta(\phi,x,t)g_t^{\eps,\phi,x} - \beta(\psi,y,t)\Tilde{g}^{\eps,\psi,y}_t}_{\lp{2}}^2 \dd t \right)\\\notag
    &\qquad+ \E\left( \int_0^T\abs{\int_0^1 \Tilde{\beta}(\phi,x,t)(M^{\phi,x}_t)(z)-\Tilde{\beta}(\psi,y,t)(\Tilde{M}^{\psi,y}_t)(z) \dd z}^2 \dd t\right)\Bigg]\\\notag
    \le&  \frac{C}{T}\Big(\abs{x-y}^2+\norm{\phi-\psi}^2_{\lp{2}}\Big)+C\Bigg(\E\left(\int_0^T \norm{\beta(\psi,y,t)(g^{\eps,\phi,x}_t-\Tilde{g}_t^{\eps,\psi,y})}_{\lp{2}}^2\dd t\right)\\\notag
    &\qquad+ \E\left(\int_0^T \norm{g^{\eps,\phi,x}_t(\beta(\phi,x,t)-\beta(\psi,y,t))}_{\lp{2}}^2 \dd t\right)\\\notag
    &\qquad+ \E\left(\int_0^T \int_0^1 \abs{\Tilde{\beta}(\phi,x,t)(M^{\phi,x}_t)(z)-\Tilde{\beta}(\psi,y,t)(\Tilde{M}^{\psi,y}_t)(z)}^2\dd z \dd t  \right) \Bigg).\\ \notag 
    \le&  \frac{C}{T}\Big(\abs{x-y}^2+\norm{\phi-\psi}^2_{\lp{2}}\Big)+C\Bigg( +\E\left(\int_0^T \norm{\beta(\psi,y,t)(g^{\eps,\phi,x}_t-\Tilde{g}_t^{\eps,\psi,y})}_{\lp{2}}^2\dd t\right)\\\notag
    &\qquad+ \E\left(\int_0^T \norm{g^{\eps,\phi,x}_t(\beta(\phi,x,t)-\beta(\psi,y,t))}_{\lp{2}}^2 \dd t(\ind{\tau_n(\phi)<T}+\ind{\tau_n(\phi)\ge T})\right)\\\notag
    &\qquad+ \E\left(\int_0^T \int_0^1 \abs{\Tilde{\beta}(\phi,x,t)(M^{\phi,x}_t)(z)-\Tilde{\beta}(\psi,y,t)(\Tilde{M}^{\psi,y}_t)(z)(\ind{\tau_n(\phi)<T}+\ind{\tau_n(\phi)\ge T})}^2\dd z \dd t  \right) \Bigg).\notag\\
    &\le  \frac{C}{T}\Big(\abs{x-y}^2+\norm{\phi-\psi}^2_{\lp{2}}\Big)+C\Bigg( \E\left(\int_0^T \norm{\beta(\psi,y,t)(g^{\eps,\phi,x}_t-\Tilde{g}_t^{\eps,\psi,y})}_{\lp{2}}^2\dd t\right)\notag \\ \notag
    &\qquad+ \E\left(\int_0^T \norm{g^{\eps,\phi,x}_t(\beta(\phi,x,t)-\beta(\psi,y,t))}_{\lp{2}}^2 \dd t\ind{\tau_n(\phi)<T}\right)\\\notag
     &\qquad+ \E\left(\int_0^{T\land \tau_N(\phi)} \norm{g^{\eps,\phi,x}_t(\beta(\phi,x,t)-\beta(\psi,y,t))}_{\lp{2}}^2 \dd t\right)\\\notag
    &\qquad+ \E\left(\int_0^{T\land \tau_N(\phi)} \int_0^1 \abs{\Tilde{\beta}(\phi,x,t)(M^{\phi,x}_t)(z)-\Tilde{\beta}(\psi,y,t)(\Tilde{M}^{\psi,y}_t)(z)}^2\dd z \dd t  \right) \notag \\
    &\qquad+ \E\left(\int_0^{T} \int_0^1 \abs{\Tilde{\beta}(\phi,x,t)(M^{\phi,x}_t)(z)-\Tilde{\beta}(\psi,y,t)(\Tilde{M}^{\psi,y}_t)(z)}^2\dd z \dd t\ind{\tau_N(\phi)<T}  \right)
    \Bigg).\notag\\
    &\le \frac{C}{T}\Big(\abs{x-y}^2+\norm{\phi-\psi}^2_{\lp{2}}\Big)+ C\Bigg( \E\left(\int_0^T \norm{\beta(\psi,y,t)(g^{\eps,\phi,x}_t-\Tilde{g}_t^{\eps,\psi,y})}_{\lp{2}}^2\dd t\right)\notag \\ \notag
    &\qquad+ \E\left(\int_0^T \norm{g^{\eps,\phi,x}_t}_{\lp{2}}^{2p} \dd t\right)^{\frac{1}{p}}\W(\tau_n(\phi)<T)^{\frac{p-1}{p}}\\\notag
     &\qquad+ \E\left(\int_0^{T\land \tau_N(\phi)} \norm{g^{\eps,\phi,x}_t(\beta(\phi,x,t)-\beta(\psi,y,t))}_{\lp{2}}^2 \dd t\right)\\\notag
    &\qquad+ \E\left(\int_0^{T\land \tau_N(\phi)} \int_0^1 \abs{\Tilde{\beta}(\phi,x,t)(M^{\phi,x}_t)(z)-\Tilde{\beta}(\psi,y,t)(\Tilde{M}^{\psi,y}_t)(z)}^2\dd z \dd t  \right) \notag \\
    &\qquad+ \W(\tau_N(\phi)<T)
    \Bigg).\notag \\ 
     &\le \frac{C}{T}\Big(\abs{x-y}^2+\norm{\phi-\psi}^2_{\lp{2}}\Big)+ C\Bigg( \abs{x-y}^2+\norm{\phi-\psi}^2_{\lp{2}}\\
     &\qquad+\E\left(\int_0^T \norm{\beta(\psi,y,t)(g^{\eps,\phi,x}_t-\Tilde{g}_t^{\eps,\psi,y})}_{\lp{2}}^2\dd t\right)\notag \\ \notag
    &\qquad+ \E\left(\int_0^T \norm{g^{\eps,\phi,x}_t}_{\lp{2}}^{2p} \dd t\right)^{\frac{1}{p}}\W(\tau_n(\phi)<T)^{\frac{p-1}{p}}\\\notag
     &\qquad+ \E\left(\int_0^{T\land \tau_N(\phi)} \norm{g^{\eps,\phi,x}_t(\beta(\phi,x,t)-\beta(\psi,y,t))}_{\lp{2}}^2 \dd t\right)\\\notag
    &\qquad+ \E\left(\int_0^{T\land \tau_N(\phi)} \int_0^1 \abs{\Tilde{\beta}(\phi,x,t)(M^{\phi,x}_t)(z)-\Tilde{\beta}(\psi,y,t)(\Tilde{M}^{\psi,y}_t)(z)}^2\dd z \dd t  \right) \notag \\
    &\qquad+ \W(\tau_N(\phi)<T)
    \Bigg).\notag
\end{align*}

for $p>1$, where the last inequality follows from the Hölder inequality and boundedness of $\beta$ and $\Tilde{\beta}$. Letting $\eps \searrow 0$, applying Lemma \ref{Standardabsch} and using the Lipschitz properties of $\beta$ and $\Tilde{\beta}$, we obtain 
\begin{align*}
 &\limsup_{\eps\searrow 0} \ent(\Q^\eps_{\vert_{\F{}{T}}}\vert \W_{\vert_{\F{}{T}}})\\
 \le& \frac{C}{T}\Bigg(\abs{x-y}^2+\norm{\phi-\psi}^2_{\lp{2}}\Bigg)\\
 &\qquad+  C\Bigg(\abs{x-y}^2+\norm{\phi-\psi}^2_{\lp{2}}\\
 &\qquad+\E\Big(\int_0^{T\land \tau_N(\phi)} \big(\norm{g^{\phi,x}_t-g^{\psi,y}_t}_{\lp{2}}^2+\abs{M^{\phi,x}_t-M^{\psi,y}_t}^2\big) \norm{g^{\phi,x}_t}_{\lp{2}}^2 \dd t  \Big)\\   
    &\qquad+ \E\Big(\int_0^{T\land \tau_N(\phi)} \norm{g^{\phi,x}_t-g^{\psi,y}_t}^2_{\lp{2}}\dd t  \Big)\\
    &\qquad+ \E\big(\sup_{0\le t\le T} \norm{g^{\phi,x}_t}_{\lp{2}}^{2p}\big)^{\frac{1}{p}}\W(\tau_N(\phi) <T)^{\frac{p-1}{p}}+\W(\tau_N(\phi)<T)\Bigg)\\
      \le& \frac{C}{T}(\abs{x-y}^2+\norm{\phi-\psi}^2) \\
      &\qquad+ C\Bigg( \abs{x-y}^2+\norm{\phi-\psi}^2_{\lp{2}}\\
      &\qquad+ \E\Big(\sup_{0\le t\le T\land \tau_N(\phi)}\big(\norm{g^{\phi,x}_t-g^{\psi,y}_t}^2_{\lp{2}}+ \abs{M^{\phi,x}_t-M^{\psi,y}_t}^2\big) \norm{g^{\phi,x}_t}_{\lp{2}}^2\Big)\\
      &\qquad+ \E\Big(\sup_{0\le t \le T\land \tau_N(\phi)} \norm{g^{\phi,x}_t-g^{\psi,y}_t}^2_{\lp{2}}\Big)
      + \W(\tau_N(\phi)<T)\\
      &\qquad+ \E\big(\sup_{0\le t\le T} \norm{g^{\phi,x}_t}_{\lp{2}}^{2p}\big)^{\frac{1}{p}}\W(\tau_N(\phi) <T)^{\frac{p-1}{p}}\Bigg)\\
      \le&  C \Bigg(\Big(\frac{1}{T}+1+\exp(NT)+N\exp(NT)\Big)(\abs{x-y}^2+\norm{\phi-\psi}^2_{\lp{2}}) \\
      &\qquad+ \W(\tau_N(\phi)<T)
      +\E\big(\sup_{0\le t\le T} \norm{g^{\phi,x}_t}_{\lp{2}}^{2p}\big)^{\frac{1}{p}}\W(\tau_N(\phi) <T)^{\frac{p-1}{p}}\Bigg).
\\    \le& C\Bigg(\Big(\frac{1}{T}+1+(1+N)\exp(NT)\Big)(\abs{x-y}^2+\norm{\phi-\psi}^2_{\lp{2}}) \\
    &\quad+ \W(\tau_N(\phi)<T)^{\frac{p-1}{p}}
    +\E\big(\sup_{0\le t\le T} \norm{g^{\phi,x}_t}_{\lp{2}}^{2p}\big)^{\frac{1}{p}}\W(\tau_N(\phi) <T)^{\frac{p-1}{p}}\Bigg),
\end{align*}
Now for $\theta\in (0,1)$ choose $p=\frac{1}{1-\theta}$ such that  $\frac{p-1}{p}= \theta$, and set $N=\frac{1-\theta}{T}\log(1+\rho^{-1})$, where $\rho^2= \abs{x-y}^2+\norm{\phi-\psi}^2$. Observe that 
\begin{align*}
    \W(\tau_N(\phi)<T)\le \frac{C(1+\norm{\phi}^2_{\lp{2}})}{N}  
\end{align*}
and use Lemma \ref{Standardabsch}. Now we get, using $\log(1+x)\le x$ and $(1+x)^q\le 1+x^q$ for $x>0,q<1$, 
\begin{align*}
     & \limsup_{\eps \searrow 0}\ent(\Q_{\vert_{\F{}{T}}}^\eps\vert\W_{\vert_{\F{}{T}}})\\
    \le& C\Big(\frac{\rho^2}{T}+\big(1+\tfrac{1-\theta}{T}\log(1+\rho^{-1})\big)(1+\rho^{-1})^{1-\theta}\rho^2 \\
    &\qquad + (1+\norm{\phi}^2_{\lp{2}})^{\frac{p-1}{p}}\Big(\frac{T}{1-\theta}\Big)^{\theta}\big(\log(1+\rho^{-1})\big)^{-\theta}\\
    &\qquad + (1+\norm{\phi}^2_{\lp{2}})(1+\norm{\phi}^2_{\lp{2}})^{\theta}\Big(\frac{T}{1-\theta}\Big)^{\theta}\big(\log(1+\rho^{-1})\big)^{-\theta}\Big)\\
    \le& C\Big(\frac{\rho^2}{T}+\big(\rho^2+\tfrac{1-\theta}{T}\rho\big)(1+\rho^{-1})^{1-\theta   }\\
    &\qquad +(1+\norm{\phi}^{1+\theta}_{\lp{2}})^{2}\Big(\frac{T}{1-\theta}\Big)^{\theta}\big(\log(1+\rho^{-1})\big)^{-\theta}\Big)\\
        \le& C\Big(\frac{\rho^2}{T}+\big(\rho^2+\tfrac{1-\theta}{T}\rho\big)(1+\rho^{\theta-1})\\
    &\qquad +(1+\norm{\phi}^2_{\lp{2}})^{2}\Big(\frac{T}{1-\theta}\Big)^{\theta}\big(\log(1+\rho^{-1})\big)^{-\theta}\Big)\\
   \le& C\Bigg(\frac{1}{T}\left(\rho^2+\rho^{\theta}+\rho\right)+\left(\rho^{1+\theta}+\rho^2\right)\\
    &\qquad +(1+\norm{\phi}^{1+\theta}_{\lp{2}})^{2}\Big(\frac{T}{1-\theta}\Big)^{\theta}\big(\log(1+\rho^{-1})\big)^{-\theta}\Bigg),
\end{align*}

Moreover, consider the map $\Phi^\eps_T:\Omega \to \lp{2}([0,1])_{\ge 0}\times \R$ defined by 
\begin{align*}
    \Phi^\eps_T(\omega)= (g^{\eps,\phi,x}_T,M^{\phi,x}_T)(\omega),
\end{align*}
then we obtain by entropy duality
\begin{align*}
    \ent\big( P^\eps_T(\cdot,(\psi,y))&\vert P^\eps_T(\cdot,(\phi,x))\big)
    = \sup_{f\in \lp{\infty}(\lp{2}([0,1])\times \R)}\E\Big(f\big((g^{\eps,\phi,x}_t,M_T^{\phi,x})\big)\Big)\\
    &\quad - \log\left(\E\big(\exp(\psi(g^{\eps,\phi,x}_T,M_T^{\phi,x}))\big)\right)\\
    &=\sup_{f\in \lp{\infty}(\lp{2}([0,1])\times \R)} \E\big(f((g^{\eps,\phi,x}_T,M_T^{\phi,x}))\big)\\
    &\quad - \log\left(\E_{\Q^\eps}\big(\exp(f(g^{\eps,\phi,x}_T,M_T^{\phi,x}))\big)\right)\\
    &=\sup_{f\in \lp{\infty}(\lp{2}([0,1])\times \R)} \E\big(f(\Phi^\eps_T(\omega))\big)
    - \log\bigg(\E_{\Q^\eps}\big(\exp(f(\Phi_T^\eps(\omega)))\big)  \bigg)\\
    &\le \sup_{f\in \lp{\infty}(\Omega_{\vert_{\F{}{T}}})} \E(f) - \log(\E_{\Q^\eps}(\exp(f)))\\
    &= \ent(\Q_{\vert_{\F{}{T}}}^\eps\vert\W_{\vert_{\F{}{T}}}). 
\end{align*}

We can now deduce from lower the joint lower  semicontinuity of the entropy with respect to weak convergence that for $T \in [0,1]$
\begin{align*}
    \ent\big(&P_T(\cdot,(\psi,y))\vert P_T(\cdot,(\phi,x)) \big)
    \le \liminf_{\eps \to 0} \ent(\Q_{\vert_{\F{}{T}}}^\eps\vert \W_{\vert_{\F{}{T}}} )\\
   &\le C(T)\Bigg(\frac{1}{T}\left(\rho^2+\rho^{\theta}+\rho\right)+\left(\rho^{1+\theta}+\rho^2\right)
    +(1+\norm{\phi}^{1+\theta}_{\lp{2}})^{2}\Big(\frac{T}{1-\theta}\Big)^{\theta}\big(\log(1+\rho^{-1})\big)^{-\theta}\Bigg),
\end{align*}
which finishes the proof.
 \end{proof}

From the entropy estimate in Theorem \ref{Hauptresultat}, we can immediately deduce the strong Feller property. In fact, we actually get a slightly better result.

\begin{corollary}\label{StarkFeller}
    Let $F:\lp{2}([0,1])\times \R \to \R$ be bounded and measurable, then 
    \begin{align*}
        P_TF: &\ccc_0([0,1])_{\ge 0}\subset \lp{2}([0,1])_{\ge 0} \times \R \to \R\\
        & (\phi,x) \mapsto \E(F(g_t^{\phi,x},M_t^{\phi,x}))
    \end{align*} 
    is locally uniformly and continuous and continuosly extendable to a map 
    \begin{align*}
        \Tilde{P}_TF: \lp{2}([0,1])_{\ge 0 } \times \R \to \R  
    \end{align*}
\end{corollary}
\begin{proof}Take $M\in \N$, $\phi,\psi \in \ccc_0([0,1])_{\ge 0}$ such that 
$\norm{\phi}_{\lp{2}},\norm{\psi}_{\lp{2}} \le M$ and $x,y\in \R$.
 Note first that Theorem \ref{Hauptresultat} and Pinsker's inequality imply
\begin{align*}
   & \abs{\E(F(g^{\phi,x}_T,M^{\phi,x}_T))-\E(F(g^{\psi,y}_T,M^{\psi,y}_T))}\\
   \le& \norm{F}_{\infty}d_{TV}(P_T(\cdot,(\phi,x)),P_T(\cdot,(\psi,y))) \\
   \le& C \left(\ent(P_T(\cdot,(\psi,y))\vert P_T(\cdot,(\phi,x)) )\right)^{\frac{1}{2}}\\
   \le&   C\Bigg(\frac{1}{T\land 1}\left(\rho^2+\rho^{\theta}+\rho\right)+\left(\rho^{1+\theta}+\rho^2\right)
    +(1+\norm{\phi}^2_{\lp{2}})^{1+\theta}\Big(\frac{T\land 1}{1-\theta}\Big)^{\theta}\big(\log(1+\rho^{-1})\big)^{-\theta}\Bigg)^{\frac{1}{2}}
\end{align*}
    for any $\theta\in (0,1)$. Hence 
   \begin{align*}
       \abs{\E(F(g^{\phi,x}_T,M^{\phi,x}_T))-\E(F(g^{\psi,y}_T,M^{\psi,y}_T))}\le G_T(\rho)
   \end{align*} 
     such that $G_T:[0,\infty)\to [0,\infty)$, $G(0)=0$ and $G$ continuous. Therefore $P_TF$ is uniformly continuous on
     \[
\big(\ccc_0([0,1])\cap \{\phi \in\lp{2}([0,1])_{\ge 0}: \norm{\phi}_{\lp{2}}\le M \}\big)\times \R
\]
for all $M\in \N$.
Hence the function can be uniquely extended to a uniformly continuous function on 
\begin{align*}
     \{\phi \in\lp{2}([0,1])_{\ge 0}: \norm{\phi}_{\lp{2}}\le M \}\big)\times \R
\end{align*}
for all $M\in \N$. Hence the extension exists on $\lp{2}([0,1])_{\ge 0}\times \R$ and the result is shown.

\end{proof}

Strong uniqueness of autonomous stochastic evolution equations usually implies the Markov property. However, in this case we have an additional measure term $\eta$ which, at first glance, transforms the equation into a non-autonomous one. Since the measure term completely depends on the solutions $(g^{\phi,x}_t,M^{\phi,x}_t)$, we can proceed to prove the Markov property. Denote by $(g^{s,\phi,x}, M^{s,\phi,x}, \eta^s)$ the solutions to the equation \eqref{Loesung}, but started at time $s\ge 0$ in the usual way. Note that one can obtain the same uniqueness result as in Theorem \ref{Wohlgestellt}.
\begin{corollary}\label{Markov}
    For all $t,s\ge 0$, $\phi \in \ccc_0([0,1])_{\ge 0}$ and all $x\in \R$ we have 
    \begin{align*}
        (g^{s,\phi,x}_t,M^{s,\phi,x}_t) \overset{d}{=} (g^{\phi,x}_{t-s},M^{\phi,x}_{t-s}).
    \end{align*}
\end{corollary}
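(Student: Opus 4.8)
The plan is to exploit the time-homogeneity of the driving noise together with the pathwise uniqueness established in Theorem \ref{Wohlgestellt}. The system \eqref{System3} is autonomous: neither the drift built from $b,b'$ via the reconstruction map $A$, nor the reflection mechanism, depends on time explicitly; the only time dependence enters through the current state $(g_t,M_t)$. Consequently, shifting the time origin by $s$ should, after a matching shift of the driving noise, produce a process with the same law, and it suffices to turn this heuristic into a clean statement about a single solution functional.

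First I would introduce the shifted noise. Fix $s\ge 0$ and set $W^s(u,r):=W(u,s+r)-W(u,s)$ and $B^s_r:=B_{s+r}-B_s$ for $r\ge 0$. Since $W$ is space-time white noise and $B$ an independent Brownian motion, both with stationary and independent increments, the pair $(W^s,B^s)$ has the same law as $(W,B)$ and is independent of $\F{}{s}$. By the very definition of a solution started at time $s$, the process $(g^s_{s+r},M^s_{s+r})_{r\ge 0}$ is driven precisely by these increments $W^s,B^s$.

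Next I would argue that the solution is a fixed measurable functional of its driving data. By Lemma \ref{hindernisproblem} the reflection pair $(z,\eta)$ is a deterministic (and, by Lemma \ref{Hindernisabsch}, continuous) function of the obstacle, while the unreflected part is, through the successive-approximation construction in the proof of Theorem \ref{Wohlgestellt}, a measurable functional of the noise and the initial datum. Pathwise uniqueness then yields, by a Yamada–Watanabe type argument, a single measurable map $\Phi$ with $(g_r,M_r)_{r\ge 0}=\Phi\big((\phi,x),(W,B)\big)$ almost surely. Because \eqref{System3} is autonomous, the equation satisfied by $(g^s_{s+\cdot},M^s_{s+\cdot})$ in terms of $(W^s,B^s)$ is literally the same as the one satisfied by $(g_\cdot,M_\cdot)$ in terms of $(W,B)$, so the same map $\Phi$ represents both, i.e.\ $(g^s_{s+r},M^s_{s+r})_{r\ge 0}=\Phi\big((\phi,x),(W^s,B^s)\big)$ almost surely.

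Combining the two observations, since $\Phi$ is fixed and $(W^s,B^s)\overset{d}{=}(W,B)$,
\begin{align*}
(g^s_{s+r},M^s_{s+r})(\phi,x)=\Phi\big((\phi,x),(W^s,B^s)\big)\overset{d}{=}\Phi\big((\phi,x),(W,B)\big)=(g_r,M_r)(\phi,x),
\end{align*}
and putting $r=t-s$ for $t\ge s$ gives the claim. The main obstacle is precisely the measurability and time-shift invariance of $\Phi$ in the presence of the reflection measure $\eta$: one must verify that the obstacle-problem solution map of Lemma \ref{hindernisproblem} depends measurably on the obstacle uniformly in time, so that $\eta$ is itself a measurable image of the noise and introduces no hidden explicit time dependence. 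Once this is secured, the autonomy of \eqref{System3} and the stationarity of the increments of $(W,B)$ do the rest.
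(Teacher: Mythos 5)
Your proposal is correct and follows essentially the same route as the paper: shift the driving noise by $s$, use stationarity and independence of the increments of $(W,B)$ together with the autonomy of the system, and conclude by uniqueness. The only cosmetic difference is that you package the uniqueness step as a measurable solution functional (Yamada--Watanabe style), whereas the paper verifies directly via the weak formulation that the time-shifted triple $(g^s_{s+\cdot},M^s_{s+\cdot},\Tilde{\eta})$ solves the same equation driven by the shifted noise and then invokes uniqueness in law.
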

\begin{beweis}
   Let $\psi \in \ccc_c((0,1))$. Then, by Definition \ref{Loesung}, one can deduce that
    \begin{align*}
        \skalarq{g^{s,\phi,x}_{(t+s)-s},\psi}_{\lp{2}} &= \skalarq{\phi,\psi}_{\lp{2}} + \int^t_s \skalarq{g^{s,\phi,x}_r,\Delta\psi}_{\lp{2}} \dd r \\
        &\quad+ \int_s^t \skalarq{b^\prime(A([g^{s,\phi,x}_r,M^{s,\phi,x}_r]),\mu^s_r) g^{s,\phi,x}_r,\psi}_{\lp{2}} \dd r + \int_s^t \int_0^1 \psi(x) \eta(\dd x,\dd r)\\
        &\quad+ \int_s^t \int_0^1 \psi(x) W(\dd x ,\dd r),\\
        M^{s,\phi,x}_{(t+s)-s}&= x + \int_s^t \int_0^1 b(A([g^{\phi,x}_r,M^{\phi,x}_r])(z),\mu_r) \dd z \dd r + B_t-B_s\\
         \skalarq{g^{s,\phi,x}_{(t+s)-s},\psi}_{\lp{2}} &= \skalarq{\phi,\psi}_{\lp{2}} + \int_0^{t-s} \skalarq{g^{s,\phi,x}_{r+s},\Delta\psi}_{\lp{2}} \dd r \\
        &\quad+ \int_0^{t-s} \skalarq{b^\prime(A([g^{s,\phi,x}_{r+s},M^{s,\phi,x}_{r+s}]),\mu^s_{r+s}) g^{s,\phi,x}_{r+s},\psi}_{\lp{2}} \dd r \\
        &\quad+ \int_0^{t-s} \int_0^1 \psi(x) \Tilde{\eta}(\dd x,\dd r)
        + \int_0^{t-s } \int_0^1 \psi(x) \Tilde{W}(\dd x ,\dd r),\\
         M^{s,\phi,x}_{(t+s)-s}&= x + \int_0^{t-s} \int^1_0 b(A([g^{\phi,x}_{r+s},M^{\phi,x}_{r+s}])(z),\mu_r) \dd z \dd r + \Tilde{B}_{t-s},
    \end{align*}
   where $\Tilde{W}(\cdot,t)= W(\cdot,t+s)-W(\cdot,s)$, $\Tilde{B}_t= B_{t+s}-B_s$ and $\Tilde{\eta}(A,B)= \eta(A,B+s)$ for $A\in \mathcal{B}([0,1])$, $B\in \mathcal{B}(\R_+)$. Hence the triple $(g^{s,\phi,x}_{s+t},M^{s,\phi,x}_{s+t},\Tilde{\eta})_{t\ge 0}$ must, by uniqueness in law, coincide with $(g^{\phi,x}_t,M^{\phi,x}_t,\eta)_{t\ge 0}$ in law, and thus the result above is obtained. 
\end{beweis}

Note that the flow property (for initial conditions in $(\phi,x)\in \ccc_0([0,1])_{\ge 0}\times \R$) can also be easily shown by uniqueness of the solutions. Therefore, together with Theorem \ref{Hauptresultat}, one can obtain the Markov property in exactly the same way as in \cite{prevot2007concise}, Proposition 4.3.5. Thus Corollary \ref{StarkFeller} yields the strong Feller property.   

\begin{theorem}\label{StarkeFellerErw}
    The Markov process $((g^{\phi,x},M^{\phi,x}))_{\phi \in \ccc_0([0,1])_{\ge 0},x\in \R}$ is extendable to a Markov process $((\Tilde{g}^{\phi,x},\Tilde{M}^{\phi,x}))_{\phi\in \lp{2}([0,1])_{\ge 0},x\in \R}$ such that $(\Tilde{g}^{\phi,x},\Tilde{M}^{\phi,x})$ is a strong Feller process with state space $\lp{2}([0,1])_{\ge 0}\times \R $. Moreover, there a constant $C \in (0,\infty)$ such that for all $T>0$ and all $(\phi,x),(\psi,y)\in \lp{2}([0,1])_{\ge 0}\times \R$
    \begin{align*}
       &\ent\big(P_T(\cdot,(\psi,y))\vert P_T(\cdot,(\phi,x))\big) \\
       \le\; & C \Bigg(\frac{1}{T \land 1 }\left(\rho^2+\rho^{\theta}+\rho\right)+\left(\rho^{1+\theta}+\rho^2\right)
    +(1+(\norm{\phi}^2_{\lp{2}})^{1+\theta}\Big(\frac{T\land 1 }{1-\theta}\Big)^{\theta}\big(\log(1+\rho^{-1})\big)^{-\theta}\Bigg)
    \end{align*}
 holds, where $\rho^2=\abs{x-y}^2+\norm{\phi-\psi}^2$.
\end{theorem}

 \begin{proof}
     Let $\phi\in \lp{2}([0,1])_{\ge 0}$ and consider a sequence $(\phi_n)_{n\in \N}$ such that $\norm{\phi_n-\phi}_{\lp{2}}\to 0$. Then we can conclude that $(g^{\phi_n,x}_t,M^{\phi_n,x}_t)$ is Cauchy  in $\lp{2}(\Omega,\ccc([0,T],\lp{2}([0,1])\times \R))$ by the following argument: choose $M\in \N$ such that for all $n\ge M$ we have $\norm{\phi_n}_{\lp{2}([0,1])}\le C$. Let $\eps > 0$. By Lemma \ref{Standardabsch} one can choose $N\in \N$ such that for all $m\ge n\ge M$,
 \begin{align*}
     \E\Big(\sup_{0\le t\le T}\norm{g_t^{\phi_m,x}-g_t^{\phi_n,x}}^4_{\lp{2}} + \abs{M_t^{\phi_m,x}-M_t^{\phi_n,x}}^4 \Big)^{\frac{1}{2}}\W(\tau_N(\phi_n)\le T)^{\frac{1}{2}} \le \eps,
 \end{align*}
 where $\tau_N(\phi_n)= \inf\{t\ge 0: \norm{g_t^{\phi_n,x}}_{\lp{2}} \ge N\}$. Hence 
 \begin{align*}
     &\E\Big(\sup_{0\le t\le T} \norm{g_t^{\phi_m,x}-g_t^{\phi_n,x}}_{\lp{2}}^2 +\abs{M_t^{\phi_m,x}-M_t^{\phi_n,x}}^2\Big)\\
     =& \E\Big(\sup_{0\le t\le T} \big(\norm{g_t^{\phi_m,x}-g_t^{\phi_n,x}}_{\lp{2}}^2 +\abs{M_t^{\phi_m,x}-M_t^{\phi_n,x}}^2\big)\big(\ind{\{\tau_N(\phi_n) \le T\}} +\ind{\{\tau_N(\phi_n)>T\}}\big)\Big)\\ 
     \le & \E\Big(\sup_{0\le t\le T\land \tau_N(\phi_n)} \norm{g_t^{\phi_m,x}-g_t^{\phi_n,x}}_{\lp{2}}^2 +\abs{M_t^{\phi_m,x}-M_t^{\phi_n,x}}^2\Big)\\
     &\quad +\E\Big(\sup_{0\le t\le T}\norm{g_t^{\phi_m,x}-g_t^{\phi_n,x}}_{\lp{2}}^4 + \abs{M_t^{\phi_m,x}-M_t^{\phi_n,x}}^4 \Big)^{\frac{1}{2}}\W(\tau_N(\phi_n)\le T)^{\frac{1}{2}}.
 \end{align*}
By Lemma \ref{Standardabsch} we get 
 \begin{align*}
  \lim_{m\ge n\to \infty }   \E\Big(\sup_{0\le t\le T} \norm{g_t^{\phi_m,x}-g_t^{\phi_n,x}}_{\lp{2}}^2 +\abs{M_t^{\phi_m,x}-M_t^{\phi_n,x}}^2\Big)\le \eps.
 \end{align*}
 Since $\eps>0$ was arbitrary, we can thus define $(\Tilde{g}^{\phi,x},\Tilde{M}^{\phi,x})$ with $(\phi,x)\in\lp{2}([0,1])_{\ge 0}\times \R$ as the limit of $\left((g^{\phi_n,x},M^{\phi_n,(x)})\right)_{n\in \N}$, where $\phi_n\to \phi$ and $\phi_n\in \ccc_0([0,1])_{\ge 0}$ for all $n\in \N$. One can thus show that $P_tf\in \ccc_b(\lp{2}([0,1])_{\ge 0}\times \R)$ whenever $f\in \ccc_b(\lp{2}([0,1])\times \R)$ and is Lipschitz on $\lp{2}([0,1])\times \R$.   

 Furthermore, let $0\le s_1\le \dots\le s_n \le t$ and $\psi,f_1,\dots, f_n \in \ccc_b(\lp{2}([0,1])\times \R)$ be Lipschitz. By a monotone class argument it suffices to show 
 \begin{align*}
     &\E\Big(\psi\big((\Tilde{g}^{\phi,x}_t,\Tilde{M}^{\phi,x}_t)\big) f_1\big((\Tilde{g}^{\phi_x}_{s_1},\Tilde{M}^{\phi,x}_{s_1})\big)\dots f_n\big((\Tilde{g}^{\phi,x}_{s_n},\Tilde{M}^{\phi,x}_{s_n})\big)\Big)\\
     =&\E\Big(P_{t-s_n}\psi\big((\Tilde{g}^{\phi,x}_{s_n},\Tilde{M}^{\phi,x}_{s_n})\big) f_1\big((\Tilde{g}^{\phi,x}_{s_1},\Tilde{M}^{\phi,x}_{s_1})\big)\dots f_n\big((\Tilde{g}^{\phi,x}_{s_n},\Tilde{M}^{\phi,x}_{s_n})\big)\Big),
 \end{align*}
 which follows for $(\phi,x)\in \ccc_0([0,1])_{\ge 0}\times \R$ by classical arguments, i.e.\ uniqueness and Corollary \ref{Markov}. By the approximation argument from above, the equality extends to all $(\phi,x)\in \lp{2}([0,1])_{\ge 0}\times \R$. The last statement follows from Theorem \ref{Hauptresultat} and the fact that, for all $\phi\in \lp{2}([0,1])_{\ge 0}$,
 \begin{align*}
     P_T(\cdot,(\phi^n,x)) \to P_T(\cdot,(\phi,x))
 \end{align*}
 in the weak topology whenever $(\phi^n)_{n\in \N}\in \ccc_0([0,1])_{\ge 0}$ with $\norm{\phi^n-\phi}_{\lp{2}}\to 0$. 
  \end{proof}

Finally, we can conclude the main result.

\begin{theorem}  
Under the assumptions \hyperlink{(A2)}{(A2)} and \hyperlink{(A3)}{(A3)}, the system \eqref{System3} is well posed for initial conditions $M_0 \in \R$, $0\leq g_0 \in C_0([0,1])$. The family of solutions induces a unique Markov process on $ \PP_2^1(\R)$ such that, for all bounded measurable $F:  \PP_2^1(\R) \mapsto \R$, the map   
\[
  \PP_2^1(\R)\ni \mu \mapsto  \mathbb E \big(F(\mu_t)\,\big|\mu=\mu\big) \in \R
\]
is continuous, locally uniformly with respect to the metric $\rho$. 
\end{theorem}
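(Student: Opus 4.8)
The plan is to transport everything already proved for the $(g,M)$-system on $\lp{2}(\T)_{\ge0}\times\R$ to the measure-valued picture through the inverse-cdf identification, the point being that this identification is an \emph{isometry} for the metric $d_{1,2}$. First, well-posedness of \eqref{System3} for $(g_0,M_0)\in\ccc(\T)_{\ge0}\times\R$ is immediate from Theorem \ref{Wohlgestellt}: the estimates carried out right after assumptions \hyperlink{(A2)}{(A2)} and \hyperlink{(A3)}{(A3)} show that the coefficients $(\phi,x)\mapsto b'(A([\phi,x]),\lambda\circ A^{-1}([\phi,x]))\phi$ and $(\phi,x)\mapsto\int_\T b(A([\phi,x]),\lambda\circ A^{-1}([\phi,x]))$ are locally Lipschitz in the sense of \eqref{loklip} and of at most linear growth, so there is a unique solution triple $(g,M,\eta)$ with $g_t\in\ccc(\T)_{\ge0}$.

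Next I would set up the identification map $\Phi(g,M):=\lambda\circ (A([g,M]))^{-1}$. By Theorem \ref{isometrie} this sends an admissible pair $(g,M)$, i.e.\ one with $F=A([g,M])\in G$, to a measure in $\PP_2(\T)$ whose equivariant inverse cdf is exactly $F$. Since $\partial_u A([g,M])=g$ and, by antisymmetry of $\int_0^1\int_0^1\int_v^u g(r)\,\dd r\,\dd v\,\dd u$ under $u\leftrightarrow v$, one has $\langle A([g,M])\rangle=M$, the pair $(g,M)$ is recovered from $\mu=\Phi(g,M)$ as $(F_\mu',\langle F_\mu\rangle)$. Hence $\Phi$ is a bijection onto $\mathcal M_1^2(\T)$, and directly from the definition of the metric,
\[ d_{1,2}(\Phi(g,M),\Phi(\tilde g,\tilde M)) = |M-\tilde M| + \|g-\tilde g\|_{L^2(\T)}, \]
so that $\Phi$ is an isometric homeomorphism between $\mathcal M_1^2(\T)$ and the corresponding subset of $\lp{2}(\T)_{\ge0}\times\R$ with the product metric. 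Because $\mu_t=\Phi(g_t,M_t)$ and $\Phi$ is a bijection, $\sigma(\mu_t)=\sigma(g_t,M_t)$, so the time-homogeneity and Markov property established in Corollary \ref{Markov} and Theorem \ref{StarkeFellerErw} transfer verbatim and yield a Markov process on $\mathcal M_1^2(\T)$.

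It then remains to transfer the strong Feller estimate. Given bounded measurable $F:\mathcal M_1^2(\T)\to\R$, the pullback $\tilde F:=F\circ\Phi$ is bounded and measurable on $\lp{2}(\T)_{\ge0}\times\R$ since $\Phi$ is a homeomorphism; writing $\E(F(\mu_t)\mid\mu_0=\mu)=\E(\tilde F((g_t,M_t)(\phi,x)))$ with $(\phi,x)=\Phi^{-1}(\mu)$, Theorem \ref{Hauptresultat} and its extension Theorem \ref{StarkeFellerErw} give continuity of $(\phi,x)\mapsto\E(\tilde F((g_t,M_t)(\phi,x)))$, uniformly on $\{\norm{\phi}_{\lp{2}}\le M\}\times\R$ for each $M\in\N$. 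As $\Phi$ carries $d_{1,2}$-balls to product-metric balls, this is exactly the asserted local uniform continuity of $\mu\mapsto\E(F(\mu_t)\mid\mu_0=\mu)$ on $\mathcal M_1^2(\T)$.

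The step I expect to require the most care is the identification itself: one must verify that the SPDE dynamics are compatible with the inverse-cdf representation, i.e.\ that $A([g_t,M_t])$ remains an admissible monotone, equivariant inverse cdf so that $\mu_t$ genuinely lives in $\mathcal M_1^2(\T)$, and that the bookkeeping $g=F_\mu'$, $M=\langle F_\mu\rangle$ is precisely the one making $d_{1,2}$ coincide with the $\lp{2}\times\R$ product metric. Once this identity and the invariance of the admissible class are in hand, the transfer of well-posedness, of the Markov property, and of the strong Feller property is purely formal.
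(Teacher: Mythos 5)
Your proposal is correct and follows essentially the same route as the paper: the paper's own proof consists precisely of defining $\mu_t=\lambda\circ (A([g_t,M_t])(\cdot))^{-1}$ with $\mu_0$ recovered from $(\frac{\partial}{\partial u}F^{\mu_0},\langle F^{\mu_0}\rangle)$ and then invoking Theorem \ref{Wohlgestellt}, Theorem \ref{Hauptresultat} and Theorem \ref{StarkeFellerErw}. Your write-up merely makes explicit the isometric identification between $d_{1,2}$ and the $\lp{2}(\T)\times\R$ product metric that the paper leaves implicit.
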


\begin{beweis}
The Markov process $(\mu_t)_{t\ge 0}$ is defined by $\mu_t= \lambda \circ (A[(g_t,M_t)] (\cdot))^{-1}$ for all $t> 0$ and $\mu = \lambda \circ\left(A\big([\tfrac{\partial}{\partial u}F^{\mu}],\skalarq{F^{\mu}}\big)(\cdot)\right)^{-1}$, where $F^{\mu}$ is the inverse CDF of $\mu\in \PP_2^1(\R)$. The result now follows from Theorem \ref{Wohlgestellt}, Theorem \ref{Hauptresultat} and Theorem \ref{StarkeFellerErw}. 
\end{beweis}

\renewbibmacro{in:}{} 

\nocite{lasry2006jeux,lasry22006jeux,lasry2007mean}  \nocite{huang2003individual,huang2007nash}.

\nocite{delarue2020master}  
\printbibliography
\end{document}